\documentclass[11pt]{amsart}
\pdfoutput=1
\usepackage[utf8]{inputenc} 
\usepackage[T1]{fontenc}
\usepackage{pgfplots}
\pgfplotsset{ compat=1.18}
\usepgfplotslibrary{fillbetween}

\usepackage{subcaption}
\usepackage{amsmath, amsthm, amssymb, enumerate, xcolor, tikz-cd, listings, MnSymbol, textcomp}
\usepackage{tikz}
\usepackage{float}
\usepackage[colorlinks=true, linkcolor=red, citecolor=blue, urlcolor=cyan]{hyperref}

\usepackage{fullpage}

\newcommand{\makeheading}[1]%
{\hspace*{-\marginparsep minus \marginparwidth}%
	\begin{minipage}[t]{\textwidth}%
		{\large \bfseries #1}\\[-0.15\baselineskip]%
		\rule{\columnwidth}{1pt}%
\end{minipage}}

\usepackage{mathtools}

\usepackage{amsmath, amsthm, amssymb, enumerate, hyperref, xcolor, tikz-cd, listings,hyperref,comment}

\usepackage{parskip}

\newcommand{\R}{\mathbb{R}}

\newcommand{\T}{\mathbb{T}}

\renewcommand{\hat}{\widehat}

\usepackage{soul}
\usepackage{cancel}

\theoremstyle{plain}
\newtheorem{theorem}{Theorem}
\newtheorem{theoremA}{Theorem}

\newtheorem{lemma}[theorem]{Lemma}

\newtheorem{proposition}[theorem]{Proposition}

\theoremstyle{definition}

\newtheorem*{definitionN}{Definition}

\renewcommand{\phi}{\varphi}

\usepackage{tikz}
\renewcommand{\epsilon}{\varepsilon}

\def\semicolon{;}
\def\applytolist#1{
	\expandafter\def\csname multi#1\endcsname##1{
		\def\multiack{##1}\ifx\multiack\semicolon
		\def\next{\relax}
		\else
		\csname #1\endcsname{##1}
		\def\next{\csname multi#1\endcsname}
		\fi
		\next}
	\csname multi#1\endcsname}

\def\calc#1{\expandafter\def\csname c#1\endcsname{{\mathcal #1}}}
\applytolist{calc}QWERTYUIOPLKJHGFDSAZXCVBNM;
\def\bbc#1{\expandafter\def\csname bb#1\endcsname{{\mathbb #1}}}
\applytolist{bbc}QWERTYUIOPLKJHGFDSAZXCVBNM;
\def\bfc#1{\expandafter\def\csname bf#1\endcsname{{\mathbf #1}}}
\applytolist{bfc}QWERTYUIOPLKJHGFDSAZXCVBNM;
\def\sfc#1{\expandafter\def\csname s#1\endcsname{{\sf #1}}}
\applytolist{sfc}QWERTYUIOPLKJHGFDSAZXCVBNM;
\def\fc#1{\expandafter\def\csname f#1\endcsname{{\mathfrak #1}}}
\applytolist{fc}QWERTYUIOPLKJHGFDSAZXCVBNM;

\title{Local smooth rigidity of Anosov \\diffeomorphisms in $\mathbb{T}^{3}$}

\author{James Marshall Reber}
\address{\parbox{\linewidth}{({\normalfont{J. Marshall Reber}}) \\ Department of Mathematics \\ University of Chicago \\ Chicago, IL 60637, USA \\[-.5em] }}
\email{jmarshallreber@uchicago.edu}

\author{Sebasti\'an Pavez-Molina}
\address{\parbox{\linewidth}{({\normalfont{S. Pavez-Molina}}) \\ Department of Mathematics \\ The Pennsylvania State University \\ State College, PA 16802, USA \\[-.5em] }}
\email{sap5974@psu.edu}

\begin{document}

\begin{abstract}
    Given a $C^0$ conjugacy between two Anosov diffeomorphisms, the matching periodic data problem asks whether this conjugacy is smooth provided spectral data of the diffeomorphisms match at periodic points. We show that if the two $C^0$ conjugate diffeomorphisms on $\mathbb{T}^3$ are sufficiently close to a hyperbolic linear automorphism with a pair of complex conjugate eigenvalues, then the conjugacy must be smooth. In particular, we have that in a neighborhood of a hyperbolic toral automorphism, matching periodic data implies that the conjugacy is $C^{1+\text{H\"older}}$.
\end{abstract}

\maketitle
\section{Introduction}

Let $M$ be a compact smooth manifold equipped with a Riemannian metric and let $f : M \rightarrow M$ be a diffeomorphism. 
The diffeomorphism $f$ is \emph{Anosov} if there exists a $Df$-invariant splitting of the tangent bundle into stable and unstable bundles such that $Df$ exponentially contracts vectors in the stable bundle and exponentially expands vectors in the unstable bundle, and it is \emph{transitive} if there exists a dense orbit.
For example, if $M = \mathbb{T}^n$, an automorphism in $\text{SL}(n, \mathbb{Z})$ whose spectrum does not intersect the unit sphere is a transitive Anosov diffeomorphism; such maps are called \emph{hyperbolic toral automorphisms}. 

Anosov diffeomorphisms are the prototypical example of chaotic dynamical systems, meaning that orbits are sensitive to small changes in initial data. 
Thanks to this, Anosov diffeomorphisms satisfy \emph{structural stability}: 
\begin{quote}
\emph{If $f$ is a $C^1$-Anosov diffeomorphism, then for every $C^1$-close diffeomorphism $g$ there is a unique homeomorphism $h : M \rightarrow M$ which is close to the identity and which conjugates the dynamical systems.}
\end{quote} 
Taking advantage of the dynamics, one can show that such a homeomorphism must be bi-H\"older.
A natural question is what conditions force this conjugating homeomorphism to have higher regularity.
One necessary condition for $C^1$-regularity of the conjugating map $h$ is that the diffeomorphisms have \emph{matching periodic data} in the following sense.
\begin{definitionN}
 Let $f, g: M \to M$ be two diffeomorphisms conjugated by a map $h: M \to M$. We say that $f$ and $g$ have \emph{matching periodic data} if for every periodic point $p \in M$ with $f^n(p) = p$, there exists a linear map $C_p: T_p M \to T_{h(p)} M$ such that $D_p f^n = C_p^{-1} (D_{h(p)} g^n) C_p$. 
\end{definitionN}
Our motivation is to understand when this necessary condition is sufficient: this is called the \emph{matching periodic data problem}. Note that it is not typical for a pair of Anosov diffeomorphisms to have matching periodic data -- it is possible to perturb a diffeomorphism in order to construct a $C^0$-conjugacy which does not have matching periodic data. In the case where $M = \mathbb{T}^2$, the matching periodic data problem was solved by de la Llave, Marco, and Moriy\'on \cite{de1987invariants, de1992smooth, marco1987invariants}, with the main technique involving the study of the cohomological equation and measure rigidity.
When $M = \mathbb{T}^n$ with $n \geq 3$, the problem becomes much more difficult due to the fact that one of the stable or unstable bundles has dimension at least two.

Progress towards the matching periodic data problem in the case where $n \geq 3$ has mostly been made towards the \emph{local matching periodic data problem}, which involves diffeomorphisms that are in a sufficiently small neighborhood of a hyperbolic toral automorphism. This is of particular interest when $M = \mathbb{T}^3$. When the spectrum of $L$ is real, Gogolev and Guysinsky showed that local matching periodic data implies that the conjugacy is $C^{1+\text{H\"older}}$ \cite[Theorem 1]{gogolev2008c}. In the case where the spectrum of $L$ has a pair of complex conjugate eigenvalues, Kalinin and Sadovskaya showed that local matching periodic data implies that the conjugacy is $C^{1+\text{H\"older}}$, provided one of the maps is linear \cite[Corollary 2.6]{kalinin2009anosov}. One can also interpret their argument as handling the case where one of the maps has constant periodic data. A bootstrapping argument by Gogolev shows that in the complex case, the conjugacy is smooth \cite{gogolev2017bootstrap}. More recently, progress has been made by Gogolev and Rodriguez Hertz for the non-algebraic case, i.e., when the maps do not necessarily have constant periodic data.  They showed that a dichotomy holds: if the periodic data of $f$ and $g$ are non-constant, then either the conjugacy is smooth or the SRB measure of one of the diffeomorphisms must coincide with the measure of maximal entropy \cite[Theorems 1.2 and 1.7]{GRH}. Combined with the previous results, this nearly establishes local periodic data rigidity. The main purpose of this paper is to handle the remaining case:

\begin{theoremA}\label{thm:main}
 Let $L: \mathbb{T}^{3} \to \mathbb{T}^{3}$ be a hyperbolic automorphism with a pair of non-real complex conjugate eigenvalues and let $r \geq 2$. There is a $C^{1}$-neighborhood $\mathcal{U}$ of $L$ such that if $f,g \in \mathcal{U}$ are any pair of $C^{r}$-Anosov diffeomorphisms which have non-constant matching periodic data and which satisfy the condition that the SRB measure and the measure of maximal entropy of $f$ coincide, then $f$ and $g$ are $C^{r_*}$-conjugate, where $r_*=r$ if $r$ is not an integer and $r_*=r-1+
 \text{Lip}$ if $r$ is an integer.
\end{theoremA}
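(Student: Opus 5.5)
We will write $L$ with eigenvalues $\lambda$ real and $\mu,\bar\mu$ complex, and assume without loss of generality (replacing $f,g$ by their inverses if necessary) that the complex pair is contracting. Then $E^s$ is two-dimensional and $E^u$ is one-dimensional.

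The proof rests on the hypothesis that the SRB measure of $f$ equals its measure of maximal entropy, $\mu_{SRB}=\mu_{MME}$. The unstable conditional densities of the MME are given by the Margulis construction, so equality forces the unstable Jacobian $J^u f$ to be cohomologous to the constant $h_{top}(f)=\log|\lambda|$. By Livšic, $J^u f^n(p)=|\lambda|^n$ at every periodic point $p$ of period $n$.

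Matching periodic data then gives the same unstable eigenvalue for $g$ at $h(p)$. We also have $\det Df^n(p)=\det Dg^n(h(p))$. So both diffeomorphisms have constant unstable periodic data and matching stable determinants.

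\textbf{Unstable direction.} Since $J^uf$ and $J^ug$ are both cohomologous to $\log|\lambda|$ and $E^u$ is one-dimensional, the de la Llave argument applies. It shows that $h$ is $C^{r_*}$ along unstable leaves, uniformly, with an unstable holonomy-invariant affine structure on each side.

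\textbf{Stable direction.} The periodic data of $Df|_{E^s}$ are $2\times 2$ matrices close to a rotation-times-scaling. The key step is to show that the stable periodic data are in fact conformal. Here I would use the non-constancy hypothesis together with the measure equality.

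Write $Df|_{E^s}$ as a cocycle close to $L|_{E^s}$. By Kalinin–Sadovskaya it is fiber-bunched, and its periodic data are conjugate to those of $g$.

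The plan is to produce a continuous invariant conformal structure on $E^s$. First, since the stable Jacobian equals $\det - J^u$, the determinant part of the stable Jacobian is determined. Next, the complex eigenvalues at each periodic point stay non-real because we are $C^1$-close to $L$. Consider the projectivized cocycle on $\mathbb{P}(E^s)$. Near-conformality means the only candidate obstructions are invariant measurable line fields or conformal structures. By Kalinin–Sadovskaya's dichotomy for fiber-bunched $GL(2)$ cocycles with no real eigenvalues at periodic points, the cocycle preserves a continuous conformal structure. If instead it were not uniformly quasiconformal, there would be a periodic point with real eigenvalues, which is excluded.

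This yields uniform quasiconformality on $E^s$ for both $f$ and $g$. By Kalinin–Sadovskaya, $h$ is then $C^{1+\text{H\"older}}$ along stable leaves, since conformal structures on 2-dimensional leaves plus matching periodic data and Hölder conjugacy give a conformal, hence smooth, $h$ on leaves.

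\textbf{Global regularity.} Having $h$ smooth along both foliations, Journé's lemma gives $h\in C^{1+\text{H\"older}}$. Gogolev's bootstrap upgrades this to $C^{r_*}$.

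\textbf{Main obstacle.} I expect the hard step to be passing from "no real periodic eigenvalues near $L$" to a genuinely continuous conformal structure with $h$ matching the two structures. The conjugacy is only Hölder, so a cocycle rigidity argument is needed to transfer the structures. Non-constant periodic data prevents reducing directly to the linear case, so the leafwise conformality must come from this cocycle analysis rather than from comparison with $L$.
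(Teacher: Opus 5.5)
\textbf{There is a genuine gap.} Your key step claims that $C^1$-closeness to $L$ keeps the eigenvalues of $D_xf^n|_{E^s}$ non-real at \emph{every} periodic point. This is false. Closeness only controls one-step derivatives: the return map along a period-$n$ orbit is a product of $n$ matrices each near $rR_\theta$, and nothing stops that product from being hyperbolic.

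Here is a concrete counterexample within your normalization. Let $f=L\circ\psi$, where $\psi$ is the time-$t$ map of a leafwise Hamiltonian vector field tangent to $E^s_L$. Use a metric making $L|_{E^s}$ conformal, support the field near one point $x$ of a period-$n$ orbit, and give it a saddle at $x$. Then:
\begin{enumerate}
\item $W^s_f=W^s_L$, $f$ preserves volume, and $f$ has constant stable Jacobian. Hence $\log J^uf$ is cohomologous to a constant, so SRB $=$ MME holds, and the periodic data are non-constant.
\item The normalized return map at $x$ is $R_{n\theta}S$ with $S$ symmetric hyperbolic. Its trace is $2\cos(n\theta)\cosh s$, which exceeds $2$ in modulus once $n\theta$ is close enough to $0\bmod\pi$.
\item This can be arranged with $\psi$ arbitrarily $C^1$-small.
\end{enumerate}
So in every neighborhood $\mathcal{U}$, and under all the hypotheses, the two-dimensional cocycle can have periodic points with real eigenvalues of distinct moduli. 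It is then not conformal for any continuous metric, and the Kalinin--Sadovskaya criterion gives nothing. Closeness to $L$ yields ellipticity only at fixed points (or orbits of bounded period), and that is all the paper uses. The non-conformal case is exactly the content of the theorem, so your argument assumes it away. The leafwise-conformality step for $h$, which you already flagged as incomplete, therefore has nothing to stand on.

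\textbf{The missing idea} is an argument that needs ellipticity only at a fixed point $p$. The paper keeps the complex pair on the two-dimensional bundle $E^u$, where SRB $=$ MME makes $\log\det D^uf$ cohomologous to a constant. You instead spend the measure hypothesis on the one-dimensional bundle, where matching periodic data alone already gives leafwise smoothness by de la Llave. Note also that inverting $f$ does not preserve the hypothesis: the SRB measure of $f^{-1}$ is the equilibrium state of $\log J^sf$.

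With the determinant controlled, the paper proceeds as follows.
\begin{enumerate}
\item $D^uf$ and $D^ug$ normalize to $\text{SL}^{\pm}(2,\mathbb{R})$-cocycles with matching periodic traces, elliptic at $p$ with $0<|\text{Tr}|<2$.
\item The homoclinic holonomy group at $p$ is either trivial or irreducible. Trivial means an almost coboundary, i.e.\ constant periodic data, which is excluded.
\item The closing lemma transfers matching traces to the Parry representations. Irreducible representations with equal characters are conjugate, and Schur's lemma shows the same conjugator sends $A_p$ to $B_p$.
\item Sadovskaya's extension lemma then yields a H\"older conjugacy $C$ between $D^uf$ and $D^ug$.
\item $C$ intertwines the $C^1$ quadrilateral holonomy maps $\Phi_{a,b}$. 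Non-constancy produces a scalar $\varphi$ with $d\varphi(p)\neq0$, so $(\varphi,\varphi\circ f)$ is a $C^1$ chart on $W^u(p)$; it is invertible because $D^u_pf$ has non-real eigenvalues. $h$ carries this chart to the corresponding chart for $g$.
\item This gives $h\in C^1$ along $W^u$ near $p$. Stable holonomy and minimality spread this, and it is upgraded by bootstrap, de la Llave along $W^s$, and Journ\'e.
\end{enumerate}
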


Combining this with the work of Gogolev, Guysinsky, Kalinin, and Sadovskaya, we deduce the following.

\begin{theoremA} \label{thm:main2}
 Let $L: \mathbb{T}^{3} \to \mathbb{T}^{3}$ be a hyperbolic automorphism. There is a $C^{1}$-neighborhood $\mathcal{U}$ of $L$ such that for any pair of $C^{2}$-Anosov diffeomorphisms $f,g \in \mathcal{U}$, we have that $f,g$ are $C^{1+\text{H\"older}}$ conjugate provided that they have matching periodic data.
\end{theoremA}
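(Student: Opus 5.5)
We deduce Theorem \ref{thm:main2} from Theorem \ref{thm:main} and the cited results by a case analysis on the spectrum of $L$ and on whether the periodic data are constant. Take $\mathcal{U}$ to be the intersection of the neighborhoods from \cite[Theorem 1]{gogolev2008c}, \cite[Corollary 2.6]{kalinin2009anosov}, \cite[Theorems 1.2 and 1.7]{GRH} and Theorem \ref{thm:main} (with $r=2$). If $\mathcal{U}$ is small enough, every $f,g\in\mathcal{U}$ is Anosov and topologically conjugate to $L$ by structural stability. Hence $f$ and $g$ are conjugate to each other by some $h$, and matching periodic data makes sense with respect to this $h$.

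\textbf{Case 1: the spectrum of $L$ is real.} Then \cite[Theorem 1]{gogolev2008c} applies directly and gives that $h$ is $C^{1+\text{H\"older}}$.

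\textbf{Case 2: $L$ has a pair of non-real complex conjugate eigenvalues.} Then the two-dimensional bundle of $f$ (say the stable one) is conformal-type.
\begin{itemize}
\item Suppose the periodic data of $f$ are constant, i.e.\ $D_pf^n$ is conjugate to $L^n$ at every periodic point. Since $f$ and $g$ have matching data, so does $g$. Kalinin--Sadovskaya \cite[Corollary 2.6]{kalinin2009anosov} then show that both $f$ and $g$ are $C^{1+\text{H\"older}}$ conjugate to $L$, and composing these conjugacies gives the result. To phrase the hypothesis as ``constant data'' rather than ``one map linear'', we use the remark in the introduction that their argument only uses constancy of the data. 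Alternatively, we apply their result to the pairs $(f,L)$ and $(g,L)$ separately.
\item Suppose the data are non-constant. By \cite[Theorems 1.2 and 1.7]{GRH}, either $h$ is smooth, or the SRB measure of $f$ or of $g$ coincides with its measure of maximal entropy. In the latter case, swap the roles of $f$ and $g$ if necessary (matching data is symmetric, using $h^{-1}$) and apply Theorem \ref{thm:main} with $r=2$. This gives a $C^{1+\text{Lip}}$ conjugacy, in particular a $C^{1+\text{H\"older}}$ one.
\end{itemize}

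Finally, the conjugacy produced in each case must be the given $h$. A $C^0$ conjugacy close to the identity is unique, and any two conjugacies differ by a homeomorphism commuting with $f$, which here is affine. So regularity transfers to $h$.

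The main subtlety is bookkeeping rather than analysis. First, the GRH dichotomy may be stated for one of $f$, $g$ in a specific role (SRB of the map versus its inverse, i.e.\ unstable versus stable), so we may need to apply Theorem \ref{thm:main} to $f^{-1},g^{-1}$ as well. This is why $\mathcal{U}$ should also be made symmetric under inversion. Second, the notion of ``constant periodic data'' must match between \cite{kalinin2009anosov} and \cite{GRH}.
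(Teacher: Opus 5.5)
Your case analysis is the same deduction the paper intends: the paper proves Theorem \ref{thm:main2} only through the one-line remark that combines Gogolev--Guysinsky for real spectrum, Kalinin--Sadovskaya (read as covering constant periodic data), and the Gogolev--Rodriguez Hertz dichotomy together with Theorem \ref{thm:main} for non-constant data with SRB equal to MME. The point to tighten is the one you flag yourself: the ``constant'' case left over by Theorem \ref{thm:main} is that the normalized two-dimensional derivative cocycle is an almost coboundary (Proposition \ref{propn:KK}), which is strictly weaker than $D_pf^n$ being conjugate to $L^n$. In that case you must use the constant-data reading of the Kalinin--Sadovskaya argument, not your fallback of applying their corollary to $(f,L)$ and $(g,L)$, since that corollary needs the full periodic data, including the one-dimensional direction, to agree with $L$.
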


We note that it is still an interesting problem as to what happens with the regularity of the conjugacy when the spectrum is real and there is non-constant matching periodic data. We heavily rely on the fact that there are a pair of complex conjugate eigenvalues in order to get smoothness of our conjugacy, and so \cite[Problem 1.6]{GRH} is still open (see also \cite[Remark after Theorem 1]{gogolev2008c} and \cite[Problem on page 2]{gogolev2017bootstrap}). We also briefly mention that both the higher dimensional and global problems are much more complicated. 

\begin{definitionN}
	Let $f : M \rightarrow M$ be a Anosov diffeomorphism and let $k \geq 0$. We say that $f$ is \emph{$C^k$ periodic data rigid} if any $g$ which is $C^0$-conjugate to $f$ and which has the same periodic data as $f$ must be $C^k$ conjugate to $f$.
\end{definitionN}

 It was recently established by DeWitt and Gogolev that every Anosov automorphism of $\mathbb{T}^3$ is smoothly periodic data rigid \cite[Theorem 1.1]{dewitt2024dominated} . 
Motivated by this and Theorem \ref{thm:main2}, we ask the following.

{\bf \noindent Question.} Are all Anosov diffeomorphisms on $\mathbb{T}^3$ $C^\infty$ periodic data rigid?

In higher dimensions, a classical counterexample by de la Llave shows that one should not expect even local matching periodic data rigidity to hold for $\mathbb{T}^n$ with $n \geq 4$ \cite{de1992smooth} (see also \cite[Theorem B]{gogolev2008smooth}).
In spite of this counterexample, a number of things can still be said. For example, it was recently shown by Kalinin, Sadovskaya, and Wang that for irreducible hyperbolic automorphisms where no three of the eigenvalues have the same modulus are periodic data rigid \cite[Corollary 1.4]{kalinin2024global}. In light of this, it is natural to ask how far one can push the methodology here and in \cite{GRH} to higher dimensions (see \cite[Theorem 1.7]{GRH} for some discussion on higher dimensions).

\subsection*{Organization of the paper} In Section \ref{sec:preliminaries}, we recall the basic tools on Anosov diffeomorphisms, linear cocycles, normal forms, and holonomies that will be used throughout the paper. A central ingredient in our approach is the theory of holonomies of fiber bunched cocycles introduced by Bonatti, G\'omez-Montt and Viana \cite{BonattiVianaGomez2003} and later refined by Sadovskaya, Kalinin \cite{kalinin2013cocycles,sadovskaya2015cohomology}. In Section \ref{sec:pcf} we leverage $C^{1}$-closeness of the diffeomorphism to the linear model in order to deduce regularity of holonomies of the unstable derivate cocycle.

Next, inspired by the work of \cite{GRH}, we use these holonomies to construct the so-called \emph{quadrilateral holonomy map} in Section \ref{sec:pcf}. The main goal is to show that this quadrilateral holonomy map is a smooth structure that is preserved by the conjugacy. This, in turn, implies that the conjugacy itself is smooth. The details of how the matching of these maps yields Theorem \ref{thm:main} are provided in Section \ref{sec:proof}. Therefore, it remains to establish that these maps match under the conjugacy.

The fact that the SRB and MME's align will allow us to work with the unstable derivatives as $
\text{SL}^{\pm}(2,\mathbb{R})$ cocycles. In particular, we discuss in Section \ref{sec:parry} about how the matching periodic data assumption turns into a matching trace assumption for $\text{SL}^\pm(2,\mathbb{R})$-cocycles, via the introduction of the \textit{Parry monoid} of a fiber bunched cocycle. This algebraic structure will serve as our main tool for understanding the behavior of holonomies along $us$-simple loops. 

Inspired by the recent work of Ceki\'{c} and Lefeuvre in \cite{cekic2025holonomy}, we show in Theorem \ref{thm:alternativetheorem} that this matching trace assumption leads to a dichotomy: either the cocycles have to be conjugate, and hence one can show that the smooth structures are matched by the conjugacy, or one of the cocycles is an almost coboundary. Going back to the matching periodic data problem, one of the cocycles being an almost coboundary implies that we are in the case where one of the diffeomorphisms has constant periodic data, and thus our assumptions force the smooth structure to be matched.

\subsection*{Acknowledgements}
The authors would like to thank Federico Rodriguez Hertz for suggesting the problem and many useful discussions. The authors would also like to thank Jairo Bochi and Jon DeWitt for discussions early on, and Andrey Gogolev, Amie Wilkinson, Victoria Sadovskaya, Boris Kalinin and Javier Echevarr\'ia Cuesta for feedback on early drafts. The first author was supported by the
National Science Foundation under award No.\ DMS-2503020.

\section{Preliminaries} \label{sec:preliminaries}

Throughout, let $\mathbb{T}^3$ be equipped with some Riemannian metric $\|\cdot\|$.

\subsection{Anosov diffeomorphisms}
We follow \cite{katok1995introduction, bowen1975equilibrium}. As described in the introduction, we say that $f$ is \emph{Anosov} if there is a $Df$-invariant splitting of the tangent bundle $TM = E^s \oplus E^u$ along with positive continuous functions $\mu, \nu < 1$ such that for all $x \in M$ and all unit vectors $v^s \in E^s(x)$, and $v^u \in E^u$, we have
\[ \|D_xf(v^s)\| < \nu(x) < \mu(x)^{-1} < \|D_xf(v^u)\|.\] 
From now on, $f : \mathbb{T}^3 \rightarrow \mathbb{T}^3$ will be a $C^r$ transitive Anosov diffeomorphism with $r \geq 2$. It will also be convenient to adopt the following notation for $n \in \mathbb{Z}$ and $x \in \mathbb{T}^3$:
\[ \nu(x,n) \coloneq \begin{dcases} \nu(f^{n-1}(x)) \cdots \nu(x) &\text{if } n \geq 1, \\ 1 & \text{if } n = 0, \\ (\nu(f^n(x),-n))^{-1} & \text{if } n \leq -1,  \end{dcases}\]
and we use similar notation for $\mu$. 

We refer to the bundle $E^s$ as the \emph{stable bundle} and the bundle $E^u$ as the \emph{unstable bundle}. 
These bundles are tangent to foliations $W^s$ and $W^u$, and the leaves of these foliations are referred to as the \emph{stable and unstable manifolds}. For $x \in \mathbb{T}^3$, we write $W^{s/u}(x)$ to indicate the corresponding leaf containing $x$, and we note that there are local embedded submanifolds $W^{s/u}_{loc}(x) \subseteq W^{s/u}(x)$ which we call the \emph{local stable and unstable manifolds}. 

Given a curve $\gamma$ on $M$, we say that it is a \emph{$u$-path} (respectively \emph{$s$-path}) if the image of $\gamma$ lies in an unstable (respectively stable) manifold. We then say that a curve is a \emph{$us$-path} if it is a finite concatenation of $u$-paths and $s$-paths. Given a point $x \in \mathbb{T}^3$, we say that a curve $\gamma$ is a \emph{$us$-loop based at $x$} if its start and end point is $x$. Every transitive Anosov diffeomorphism $f : \mathbb{T}^3 \rightarrow \mathbb{T}^3$ is \emph{accessible}, meaning that any two points can be connected by a $us$-path.

Recall that the Anosov closing lemma says if an orbit segment $\{f^k(x) \mid 0 \leq k \leq n\}$ satisfies the property that $f^n(x)$ is close to $x$, then one is able to find a point $p \in \mathbb{T}^3$ such that $f^n(p) = p$ and the orbit of $p$ shadows the orbit segment \cite[Corollary 6.4.17]{katok1995introduction}. We actually need a slight improvement on this lemma, which follows immediately from the usual closing lemma and local product structure \cite[Theorem 6.4.15 and Proposition 4.1.16]{katok1995introduction}. 

\begin{proposition} \label{propn:anosov-closing}
    For each integer $m \geq 1$, there exists $C, \varepsilon > 0$ such that for any sequence of points $x_1, \ldots, x_m \in \mathbb{T}^3$ with $d(f^n(x_j), x_j) \leq \varepsilon$ for $1 \leq j \leq m$ and $d(x_j, x_\ell) \leq \varepsilon$ for $1 \leq j, \ell \leq m$, there exists a $p \in \mathbb{T}^3$ satisfying $f^{mn}(p) = p$, and for each $1 \leq j \leq m$ and $0 \leq k \leq n-1$,
    \[d(f^{k + (j-1)n}(p), f^k(x_j)) \leq C \alpha^{\min\{k, n-k\}} \max_{1 \leq j \leq m} d(f^n(x_j), x_j),\]
    where $\alpha \coloneq \max_{x \in \mathbb{T}^3} \{\nu(x), \mu(x)^{-1} \}.$
\end{proposition}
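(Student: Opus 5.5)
We concatenate the $m$ orbit segments into a single periodic pseudo-orbit and then invoke the shadowing form of the closing lemma \cite[Theorem 6.4.15]{katok1995introduction}, together with local product structure \cite[Proposition 4.1.16]{katok1995introduction}, to get the exponential estimate. Concretely, we define a bi-infinite sequence $(y_i)_{i\in\mathbb{Z}}$ of period $mn$. For $i = k+(j-1)n + \ell mn$ with $0\le k\le n-1$, $1\le j\le m$, $\ell\in\mathbb{Z}$, set $y_i \coloneq f^k(x_j)$.

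The only jumps occur at the indices $i=jn-1$. There $f(y_i)=f^n(x_j)$ while $y_{i+1}=x_{j+1}$, with the convention $x_{m+1}=x_1$. By the triangle inequality,
\[ d(f(y_i),y_{i+1}) \le d(f^n(x_j),x_j)+d(x_j,x_{j+1}) \le 2\varepsilon. \]
So $(y_i)$ is a periodic $2\varepsilon$-pseudo-orbit. For $\varepsilon$ small, the shadowing lemma gives a unique orbit $f^i(p)$ that $\delta$-shadows it. Uniqueness of the shadowing orbit and periodicity of the sequence force $f^{mn}(p)=p$.

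To get the quantitative bound with the factor $\max_j d(f^n(x_j),x_j)$, I would not use the crude jump bound above. Instead I would first move the base points. Since all $x_j$ are $\varepsilon$-close, local product structure lets us replace the chain of jumps by points
\[ z_j \coloneq [\,\cdot\,,\,\cdot\,] \]
built from brackets, so that each junction error is controlled by $\eta\coloneq\max_j d(f^n(x_j),x_j)$. The construction is as follows. Inductively take $w_j = W^s_{loc}(f^n(\text{previous}))\cap W^u_{loc}(\text{next start})$, as in the standard proof of the closing lemma: the stable and unstable components of each mismatch are corrected separately. Contraction along $W^s$ forward in time and along $W^u$ backward in time, at rate at most $\alpha$ per step, then gives $d(f^{k+(j-1)n}(p),f^k(x_j))\le C(\alpha^{k}+\alpha^{n-k})\cdot(\text{junction error})$. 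This is bounded by $2C\alpha^{\min\{k,n-k\}}(\text{junction error})$.

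The main obstacle is exactly this junction error. The hypothesis only says $d(x_j,x_\ell)\le\varepsilon$, not $\le C\eta$. Hence a naive concatenation gives a bound involving $d(x_j,x_{j+1})$, which is not small in terms of $\eta$. The fix is to solve the fixed-point problem for the composite map near the $x_j$'s, using the graph-transform / bracket iteration over one full cycle of length $mn$. Each individual segment $f^n$ is hyperbolic near $x_j$ with nearly identical stable and unstable directions, because the $x_j$ are close. The periodic point is obtained as the unique fixed point of a contraction on the product of local stable/unstable disks, and its displacement from each $x_j$ is governed by the segment's defect $d(f^n(x_j),x_j)$ times a constant depending only on $m$. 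That constant is where $C$ depends on $m$. In practice I would carry this out by applying the hyperbolic fixed point theorem to the map $F=f^n$ in local product charts. Writing $F$ near $x_j$ as $(u,s)\mapsto(A u+\cdots, B s+\cdots)$ with $\|A^{-1}\|,\|B\|\le\alpha^n$, one sees that $F$ has a fixed point within $C\eta$ of each $x_j$. The cyclic composition then yields the $mn$-periodic point with the stated exponential closeness at intermediate times.
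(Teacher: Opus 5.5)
Your pseudo-orbit construction, the use of uniqueness of the shadowing orbit to get $f^{mn}(p)=p$, and the bracket correction at the junctions giving the profile $\alpha^{k}+\alpha^{n-k}$ are all correct. This is essentially what the paper's one-line appeal to the closing lemma and local product structure amounts to. You also correctly saw that this argument controls the junction error $\delta:=\max_j d(f^n(x_j),x_{j+1})$ (indices mod $m$), not $\eta:=\max_j d(f^n(x_j),x_j)$.

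The gap is your last paragraph, which tries to upgrade $\delta$ to $\eta$. The hyperbolic fixed point argument for $F=f^n$ near $x_j$ is just the ordinary closing lemma applied to a single segment. It gives a point $q_j$ with $f^n(q_j)=q_j$ within $C\eta$ of $x_j$. But these are $m$ separate $n$-periodic orbits: nothing makes $f^n(q_j)$ close to $x_{j+1}$, and ``the cyclic composition'' does not glue them into one $mn$-periodic orbit. Moreover, $f^n$ is not close to a linear map on an $\varepsilon$-ball in a fixed chart, so that step would have to be carried out along the orbit, which is the shadowing argument again.

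No argument can close this gap, because for $m\ge 2$ the inequality with $\eta$ alone is false. Given $C,\varepsilon$, pick distinct periodic points $q_1\ne q_2$ with $d(q_1,q_2)\le\varepsilon$ (periodic points are dense). Let $n$ be a common multiple of their periods, and take $m=2$, $x_j=q_j$. Then $\eta=0$, so the conclusion at $k=0$ forces $p=x_1$ and $f^n(p)=x_2$, i.e.\ $q_1=f^n(q_1)=q_2$. More generally, comparing the conclusion for $(j,k)=(j,n-1)$ with that for $(j+1,0)$ shows it implies $d(x_j,x_{j+1})\le C'\eta$. The hypothesis $d(x_j,x_\ell)\le\varepsilon$ does not provide this.

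The provable statement has $\delta$ (or the larger quantity $\eta+\max_{j,\ell}d(x_j,x_\ell)$) on the right-hand side. Your first half proves exactly that, and it is all the paper uses. In the trace lemma, the points $x_j=f^{-n}(z_j)$ and $f^{2n}(x_j)=f^{n}(z_j)$ all converge to the fixed point $p$, so every junction error tends to $0$. That is the only property of $\varepsilon_n$ needed there.
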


We also recall two important invariant measures for Anosov diffeomorphisms. Let $\mathcal{M}_f$ be the collection of all $f$-invariant Borel probability measures. Given a H\"older continuous function ${\varphi : \mathbb{T}^3 \rightarrow \mathbb{R}}$ we define the \emph{topological pressure} of $\varphi$ to be 
\[ \mathcal{P}(\varphi) \coloneq \sup_{\mu \in \mathcal{M}_f} \left\{ h_f(\mu) +  \int_{\mathbb{T}^3} \varphi \,d\mu\right\},  \]
where $h_f(\mu)$ denotes the metric entropy. A measure achieving the supremum is called an \emph{equilibrium state} for $\varphi$, and it is well-known that for Anosov diffeomorphisms, every H\"older function has a unique associated equilibrium state \cite[Theorem 4.1]{bowen1975equilibrium}. Furthermore, if $\mu$ is an equilibrium state for the functions $\varphi$ and $\psi$, then there must be a H\"older continuous function $u : \mathbb{T}^3 \rightarrow \mathbb{R}$ such that for all $x \in \mathbb{T}^3$, $\varphi(x) = u(f(x)) - u(x) + \psi(x)$ \cite[Proposition 4.5]{bowen1975equilibrium}.  The \emph{measure of maximal entropy} is the equilibrium state associated to the zero function, and the \emph{SRB measure} is the equilibrium state associated to the function $x \mapsto \det(D^u_xf)$, where $D^uf$ is the \emph{unstable derivative} $D^uf \coloneq Df|_{E^u}$.

Finally, we make the standing assumption throughout the paper that $\dim(E^u) = 2$ and ${\dim(E^s) = 1}$. By considering the reverse dynamical systems if needed, it is sufficient to work with such an assumption.

\subsection{Regularity of foliations}
We now discuss some facts about the regularity of the stable and unstable manifolds (see also \cite[Sections 2.1 and 4]{GRH}). Note that for an Anosov diffeomorphism $f$, there exists constants $1 < \mu_- < \mu_+$, $1 < \nu_- < \nu_+$, and $C>0$ such that for every $n \geq 0$ and $v^{s/u} \in E^{s/u}$, we have
\begin{equation*}
   C^{-1}\mu_+^{-n}||v^{s}|| \leq ||Df^{n}(v_s)|| \leq C\mu_-^{-n} ||v^{s}|| \quad \text{and}
\quad 
     C^{-1}\nu_-^{n}||v^{u}|| \leq ||Df^{n}(v_u)|| \leq C\nu_+^{n} ||v^{u}||.
\end{equation*}
The \emph{stable and unstable bunching parameters} are given by 
\[ b^s(f) \coloneq \frac{\log(\mu_-) + \log(\nu_-)}{\log(\nu_+)}\quad \text{and}\quad b^u(f) \coloneq \frac{\log(\mu_-) + \log(\nu_-)}{\log(\mu_+)}.\]
 It was shown by Hasselblatt in \cite{hasselblatt1997regularity} that for $\varepsilon > 0$ sufficiently small the stable foliation and distribution are $C^{b^s(f) - \varepsilon}$ regular. Similarly, we have that the unstable foliation and distribution are $C^{b^u(f) - \varepsilon}$ regular for $\varepsilon$ sufficiently small. Finally, if we assume that $f$ is a small perturbation of an Anosov automorphism $L : \mathbb{T}^3 \rightarrow \mathbb{T}^3$ which has one dimensional stable subbundle and a pair of complex conjugate eigenvalues, then we may assume that the stable foliation and bundle are $C^{3- \varepsilon}$ and the unstable foliation and bundle are $C^{(3/2)-\varepsilon}$, where $\varepsilon > 0$ is sufficiently small depending on the perturbation.

\subsection{Normal forms}
Consider a hyperbolic automorphism ${L: \T^{3}\to \T^{3}}$ with a pair of complex conjugate eigenvalues outside the unit disk. The following result is an adapted version of the existence of normal forms for a small perturbation of $L$ (see also \cite{guysinsky1998normal}).
\begin{proposition}\label{propn:normalforms}
There exists a $C^{1}$-neighborhood $\mathcal{U}$ of $L$
such that for every $C^{r}$-Anosov diffeomorphism $f \in \mathcal{U}$, there exists a family of maps $\mathcal{H}_{x}: E^{u}(x) \to W^{u}(x)$ such that 
\begin{enumerate}
    \item $\mathcal{H}_{x}$ is a $C^{r}$ diffeomorphism for all $x \in M$,
    \item $\mathcal{H}_{x}(0)=x$,
    \item $D_{0}\mathcal{H}_{x}(0)=\text{Id}$,
    \item $D \mathcal{H}_{x}$ is Lipschitz along $W^u(x)$,
    \item if $y \in E^{u}(x)$, then $\mathcal{H}_{y}^{-1} \circ \mathcal{H}_{x}: E^{u}(x) \to E^{u}(y)$ is affine, and
    \item the map $x \to \mathcal{H}_{x}$ from $M$ to $\textrm{Imm}^{r} (E^{u}(x),\T^{3})$ is H\"older.
\end{enumerate}\end{proposition}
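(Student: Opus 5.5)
We follow the non-stationary normal form construction of Guysinsky–Katok and Kalinin–Sadovskaya, specialized to the situation where no resonances beyond the linear ones can occur. The first step is to fix the $C^1$-neighborhood $\mathcal{U}$ so that the unstable derivative cocycle $D^uf$ is \emph{narrow band}, i.e.\ $2$-pinched. Since $L$ has a pair of complex conjugate eigenvalues of equal modulus $\lambda>1$, the unstable spectrum of $L$ on $E^u$ is a single point $\log\lambda$. For $f$ sufficiently $C^1$-close to $L$, cone-field and continuity arguments give constants $1<\nu_-<\nu_+$ with $\nu_+$ close to $\nu_-$ such that
\[
C^{-1}\nu_-^n\|v\|\le \|D^uf^n v\|\le C\nu_+^n\|v\| \quad\text{for } v\in E^u .
\]
We choose $\mathcal{U}$ so that $\nu_+^2<\nu_-^{\,3}$, or at least $\nu_+<\nu_-^2$. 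This pinching rules out every polynomial term of degree $\ge 2$ in a normal form: the only sub-resonance relations $\lambda_i\le\lambda_1^{a_1}\lambda_2^{a_2}$ with $a_1+a_2\ge2$ that could occur would require $\nu_+\ge\nu_-^2$. Consequently the normal form along unstable leaves is linear.

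The second step is the construction itself. We use $C^r$ charts $\Phi_x:E^u(x)\to W^u(x)$, for instance the exponential map followed by projection along the stable direction, and consider the lifted extension
\[
F_x=\Phi_{f(x)}^{-1}\circ f\circ\Phi_x:E^u(x)\to E^u(f(x)),
\]
which is $C^r$ and has $D_0F_x=D^u_xf$. We seek $\mathcal{H}_x=\Phi_x\circ h_x$ with $h_x(0)=0$, $D_0h_x=\mathrm{Id}$, and
\[
h_{f(x)}^{-1}\circ F_x\circ h_x=D^u_xf .
\]
Equivalently, $\mathcal{H}_x$ linearizes $f|_{W^u}$ as $\mathcal{H}_{f(x)}^{-1}\circ f\circ\mathcal{H}_x=D^u_xf$. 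We obtain $h_x$ from the contraction
\[
h_x=\lim_{n\to\infty}F^{-n}\circ(\text{linear})^{n},
\]
that is, as the fixed point of
\[
h\mapsto F_{f^{-1}x}\circ h_{f^{-1}x}\circ (D^u_{f^{-1}x}f)^{-1}
\]
on the space of $C^r$ families with $h(0)=0$ and $D_0h=\mathrm{Id}$. In practice we run the iteration on $F^{-1}$, which contracts. The $k$-th derivative at $0$ is handled degree by degree. The operator acting on degree-$k$ terms has norm roughly $\nu_-^{-1}\cdot\nu_+^{\,k}\cdot\nu_-^{-k}$ in the appropriate direction, and pinching makes this contractive for $k\ge2$ once we pass to the inverse dynamics. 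The remainder, which is $C^r$-flat at $0$, is then solved by a standard contraction in a weighted $C^0$ norm. This gives (1), (2), (3), and the Hölder dependence (6) follows because the contraction depends Hölder-continuously on $x$, since $E^u$ and $Df$ are Hölder.

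The third step is uniqueness, which yields (5). By pinching, any two normal form families satisfying (2), (3), and the conjugation equation coincide: their difference is a sub-resonance polynomial, which must vanish since none exist in degree $\ge2$. For $y\in W^u(x)$, the family
\[
z\mapsto \mathcal{H}_x^{-1}\circ\mathcal{H}_y
\]
composed with suitable translations is again a normal form at $y$. Since $\mathcal{H}_x$ is the unique linearizing chart, which is determined by asymptotic linearization under backward iterates, the transition $\mathcal{H}_y^{-1}\circ\mathcal{H}_x$ intertwines linear maps along backward orbits, and hence is affine. Property (4) follows from the equation $D\mathcal{H}_x=\lim (DF^{-n})\cdot(\text{linear})$ together with the distortion estimate
\[
\|D\mathcal{H}_x(v)-D\mathcal{H}_x(w)\|\lesssim\sum_n \nu_+^{n}\nu_-^{-2n}\,\|v-w\|,
\]
which converges under pinching and uses only $r\ge2$.

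The main obstacle is the $C^r$ (as opposed to finite-jet) convergence of the contraction in the second step. The remainder-solving step needs the pinching strong enough relative to $r$, since the high-derivative operator scales like $\nu_+^{\,r}\nu_-^{-r-1}$. If this fails for large $r$, we will instead first establish $C^{1+\mathrm{Lip}}$ convergence and then upgrade regularity using the fact that $\mathcal{H}_x$ conjugates $f|_{W^u(x)}$ to a linear map on a one-parameter family: by the affine property (5), the $C^r$ regularity of $f$ bootstraps along the leaf.
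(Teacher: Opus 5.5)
Your plan is essentially the paper's argument: the paper's sketch cites Sadovskaya's non-stationary linearization (her Proposition 4.1) and observes that near $L$ the unstable derivative is nearly conformal, so the pinching $\nu_+<\nu_-^{2}$ forces a linear normal form — which is what you reconstruct. One correction: along backward orbits, the operator $P\mapsto D^uf\circ P\circ (D^uf)^{-1}$, on degree-$k$ jets and on the $k$-th derivative of the flat remainder, has norm about $\nu_+\nu_-^{-k}$, which is $<1$ for every $k\ge 2$ under $\nu_+<\nu_-^2$ alone, so no $r$-dependent shrinking of $\mathcal{U}$ and no bootstrap fallback is needed.
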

\begin{proof}[Sketch of Proof]
    Using the same proof as \cite[Proposition 4.1]{sadovskaya2005uniformly}, the regularity degree of the maps $\mathcal{H}_{x}$ is greather than the quasiconformality coefficient
    $$
    \frac{\log(\max\{\|D_{x}^{u}f\|^{-1}:x \in \T^{3}\})}{\log(\max \{\|D_x^{u}f\|: x \in \T^{3}\})},
    $$
    which can made arbitrarily close to 1 in a $C^{1}$-neighborhood of $L$.
\end{proof}

\subsection{Holonomies and linear cocycles}

We follow \cite{kalinin2013cocycles,sadovskaya2015cohomology}. For $\beta \in (0,1]$, let $p : \mathcal{E} \rightarrow \mathbb{T}^3$ be a finite dimensional $\beta$-H\"older vector bundle over $\mathbb{T}^3$. Given $x \in \mathbb{T}^3$, we denote the fiber over $x$ by $\mathcal{E}_x \coloneq p^{-1}(\{x\})$. Recall that there is a locally trivial open cover $\{U_i\}_{i=1}^k$ of $\mathbb{T}^3$ with homeomorphisms 
\[ \phi_i : p^{-1}(U_i) \rightarrow U_i \times \mathbb{R}^d, \quad \phi_i(v) \coloneq (p(v), \Phi_i(v))\]
such that $\phi_j \circ \phi_i^{-1}$ is a homeomorphism and the transition map $\tau_{ij}$ defined by the relationship $\varphi_j \circ \varphi_i^{-1}(x,v) = (x, \tau_{ij}(x)v)$ depends $\beta$-H\"older on $x$. Taking a partition of unity $\{\rho_i\}$ subordinate to our locally trivial cover $\{U_i\}$, we can identify $\mathcal{E}$ as a $\beta$-H\"older subbundle of the trivial bundle via 
\[ \psi : \mathcal{E} \rightarrow \mathbb{T}^3 \times \mathbb{R}^{kd}, \quad \psi(v) \coloneq (p(v), \rho_1 \Phi_1(v) \times \cdots \times \rho_k \Phi_k(v)).\] 
In particular, there is a natural way of equipping $\mathcal{E}$ with a Riemannian metric $\|\cdot\|$ through a partition of unity.

Let $F : \mathcal{E} \rightarrow \mathcal{E}$ be a bundle automorphism. We say that $F$ is a \emph{linear cocycle} over $f$ if $p \circ F = f \circ p$. Since $F$ is a bundle automorphism, notice that for each $x \in \mathbb{T}^3$, the corresponding map between fibers $F_x : \mathcal{E}_x \rightarrow \mathcal{E}_{f(x)}$ is an isomorphism. For nearby points $x,y \in \mathbb{T}^3$, let $I_{x,y} : \mathcal{E}_x \rightarrow \mathcal{E}_y$ be an identification of the fibers. We say that a linear cocycle $F$ over $f$ is \emph{$\eta$-H\"older} for some $\eta \in (0,\beta]$ if for all nearby points $x,y \in \mathbb{T}^3$, we have
\[ \|F_x - I_{f(x), f(y)}^{-1} \circ F_y \circ I_{x,y}\| \leq C d(x,y)^\eta.\]
Given an $\eta$-H\"older cocycle $F$ over $f$ and given $x \in \mathbb{T}^3$ and $n \in \mathbb{Z}$, we write 
\[ F_x^n \coloneq \begin{dcases} F_{f^{n-1}(x)} \circ \cdots \circ F_x & \text{if } n \geq 1, \\ \text{Id} & \text{if } n = 0, \\ (F^{-n}_{f^n(x)})^{-1} &\text{if } n \leq -1.  \end{dcases}\]
We say that $F$ is \emph{fiber bunched} if there exists $\theta \in (0,1)$ and $L > 0$ such that for all $x \in \mathbb{T}^3$ and $n \geq 0$, we have
\[  \|F_x^n\| \|(F_x^n)^{-1}\| \nu(x,n)^{\eta} < L \theta^n \quad\text{and}\quad \|F_x^{-n} \| \|(F_x^{-n})^{-1}\| \mu(x,-n)^{-\eta} < L \theta^n.\]
If $F$ is a fiber bunched cocyle over $f$, then one is able to show that there exists a unique family of holonomies in the following sense.

\begin{proposition}[{\cite[Proposition 4.4]{sadovskaya2015cohomology}}] \label{propn:holonomies}
    If $F$ is a $\eta$-H\"older continuous fiber bunched cocycle over $f$, then for each $x \in \mathbb{T}^3$ and $y \in W^{s/u}_{loc}(x)$, there exists a unique linear map $H^{s/u}_{x,y}(F) : \mathcal{E}_x \rightarrow \mathcal{E}_y$ which satisfies the following.
    \begin{enumerate}[(a)]
    \item We have $H^{s/u}_{x,x}(F) = \text{Id}$ and $H^{s/u}_{y,z}(F) \circ H^{s/u}_{x,y}(F) = H^{s/u}_{x,z}(F)$.
    \item For all $n \in \mathbb{N}$, we have that
    \begin{itemize}
        \item if $y \in W^s_{loc}(x)$, then $H^s_{x,y}(A) = (F^n_y)^{-1} \circ H^s_{f^n(x), f^n(y)}(F) \circ F^n_{x}$, and 
        \item if $y \in W^u_{loc}(x)$, then 
        $H^u_{x,y}(F) = F^{-n}_y \circ H^u_{f^{-n}(x), f^{-n}(y)}(F)  \circ (F^{-n}_{x})^{-1}.$
    \end{itemize}
    \item We have $\|H^{s/u}_{x,y}(F) - \text{Id}\| \leq c d(x,y)^\eta$ for some $c > 0$ independent of $x$ and $y$.
    \end{enumerate} 
    Moreover, these can be extended to the global stable and unstable manifolds, and for every $x \in \mathbb{T}^3$ we have
    \[ \begin{cases} H^s_{x,y}(F) = \lim_{n \rightarrow \infty} (F^n_y)^{-1}  \circ F^n_{x} &\text{ for } y \in W^s(x),\\ 
    H^u_{x,y}(F) = \lim_{n \rightarrow \infty} F^n_{f^{-n}(y)} \circ (F^n_{f^{-n}(x)})^{-1} &\text{ for } y \in W^u(x). \end{cases}\]    
\end{proposition}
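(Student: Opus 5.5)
The plan is to build the holonomies from the limit formulas, show the limits exist by a telescoping argument that uses fiber bunching, and then read off properties (a)--(c) and uniqueness from the construction. I treat the stable case; the unstable case is the same argument for $f^{-1}$ and the inverse cocycle, using the second fiber bunching inequality. Fix $x$ and $y\in W^s_{loc}(x)$, and write $x_k=f^k(x)$, $y_k=f^k(y)$, so that $d(x_k,y_k)\le C\nu(x,k)\,d(x,y)$. Set $H_n \coloneq (F^n_y)^{-1}\circ I_{x_n,y_n}\circ F^n_x$. (Proposition~\ref{propn:holonomies} writes $(F^n_y)^{-1}F^n_x$, with the identification $I$ implicit.) Then
\[ H_{n+1}-H_n=(F^{n}_y)^{-1}\big[(F_{y_n})^{-1} I_{x_{n+1},y_{n+1}} F_{x_n} - I_{x_n,y_n}\big]F^n_x . \]
By the $\eta$-H\"older condition on $F$, the bracket is $O(d(x_n,y_n)^\eta)=O(\nu(x,n)^\eta d(x,y)^\eta)$.

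The next step is to bound the conjugating factors. The norm of $(F^n_y)^{-1}(\cdot)F^n_x$ is at most $\|(F^n_y)^{-1}\|\,\|F^n_x\|$. I compare $F^n_y$ with $F^n_x$ via the standard lemma: for points on a common stable leaf, $\|(F^n_y)^{-1}\|\le C'\|(F^n_x)^{-1}\|$. This lemma is itself proved inductively from Hölder continuity and bunching, by showing that $(F^k_x)^{-1} I^{-1} F^k_y$ stays uniformly bounded. Granting it, the fiber bunching inequality gives
\[ \|H_{n+1}-H_n\|\le C''\, \|F^n_x\|\|(F^n_x)^{-1}\|\nu(x,n)^\eta d(x,y)^\eta\le C''L\theta^n d(x,y)^\eta, \]
a summable series. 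Hence $H^s_{x,y}=\lim H_n$ exists uniformly, and $\|H^s_{x,y}-\mathrm{Id}\|\le c\,d(x,y)^\eta$ with $c=C''L/(1-\theta)$ (note $H_0=I_{x,y}\approx \mathrm{Id}$). This gives (c).

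The remaining properties follow from the construction.
\begin{itemize}
\item Property (b) comes from rewriting $H_{n+m}$ at $(x,y)$ as $(F^m_y)^{-1}\,[H_n \text{ at } (x_m,y_m)]\,F^m_x$ and passing to the limit.
\item Property (a) follows from $H_n(x,x)=\mathrm{Id}$. The groupoid identity holds because, in the limit, $(F^n_z)^{-1}F^n_y(F^n_y)^{-1}F^n_x$ telescopes, with the identification errors vanishing as above.
\item For uniqueness, let $\tilde H$ be another family satisfying (b) and (c). Then
\[ \tilde H_{x,y}-H_{x,y}=(F^n_y)^{-1}\big(\tilde H_{x_n,y_n}-H_{x_n,y_n}\big)F^n_x , \]
whose norm is at most $C\|(F^n_x)^{-1}\|\|F^n_x\|\,d(x_n,y_n)^\eta\le C L\theta^n\to 0$.
\item For the extension to global leaves, given $y\in W^s(x)$ choose $N$ with $y_N\in W^s_{loc}(x_N)$ and define $H_{x,y}=(F^N_y)^{-1}H_{x_N,y_N}F^N_x$. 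By (b) this is independent of $N$, and it agrees with the limit formula.
\end{itemize}

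I expect the main obstacle to be the comparison $\|(F^n_y)^{-1}\|\lesssim\|(F^n_x)^{-1}\|$ uniformly in $n$. It requires a careful induction in which the accumulated Hölder errors are controlled by the bunching constant $\theta^n$, rather than being absorbed crudely.
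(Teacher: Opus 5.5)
Your proposal is correct and follows essentially the standard argument behind the cited result. The paper gives no proof of its own and simply defers to Sadovskaya's Proposition 4.4, which in turn rests on Kalinin--Sadovskaya; there, as in your proposal, existence and (c) come from the telescoping estimate controlled by fiber bunching, (b) from the exact conjugation identity, and uniqueness from the contraction argument using (b) and (c). The comparison lemma you flag does hold. Write $(F^n_x)^{-1}I^{-1}F^n_y I$ as an ordered product of factors $\mathrm{Id}+(F^i_x)^{-1}E_iF^i_x$ with $\|E_i\|\lesssim d(x_i,y_i)^\eta$; bunching makes this a convergent product. Applying the same bound with the roles of $x$ and $y$ exchanged, or bounding the inverse factors, gives $\|(F^n_y)^{-1}\|\lesssim\|(F^n_x)^{-1}\|$, which is the direction you need.
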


Following \cite{katok1996cocycles}, given a $u$-path $\gamma$ we define the \emph{periodic cycle holonomy} by
\[ PCH_\gamma(F) := H^u_{\gamma(0), \gamma(1)}(F).\]
We can do the same with $s$-paths, and we can then extend this to $us$-paths by setting concatenations as products. We observe some simple consequences of the definition and Proposition \ref{propn:holonomies}.

\begin{proposition} \label{propn:pcf}
    If $F$ is a $\gamma$-H\"older continuous fiber bunched cocycle over $f$, then the periodic cycle holonomy satisfies the following.
    \begin{enumerate}[(a)]
        \item If $\gamma_1$ and $\gamma_2$ are two $us$-curves, then 
        \[ PCH_{\gamma_2 * \gamma_1}(F) = PCH_{\gamma_1}(F) \circ PCH_{\gamma_2}(F).\]
        \item If $\gamma$ is a $us$-curve connecting $x$ and $y$, then 
        \[ PCH_{f(\gamma)}(F) = (F_y)^{-1} \circ PCH_\gamma(F) \circ F_x.\]
    \end{enumerate}
\end{proposition}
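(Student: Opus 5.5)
Both items follow from the definition of $PCH$ together with Proposition~\ref{propn:holonomies}, and I would first verify them for a single $u$-path or $s$-path and then extend to general $us$-paths by induction on the number of segments.

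For (a), the definition already sets the holonomy of a concatenation to be the composition of the holonomies of its pieces, in the order in which the pieces are traversed, with the notational convention of the statement. The only thing to check is that this is well defined and associative. Well-definedness means $PCH$ of a single $u$-path depends only on its endpoints; this holds because $H^u_{\gamma(0),\gamma(1)}(F)$ only sees the endpoints, and property (a) of Proposition~\ref{propn:holonomies} gives $H^u_{y,z}\circ H^u_{x,y}=H^u_{x,z}$ on a common leaf. Hence splitting or merging consecutive segments in the same leaf does not change the product. Associativity of composition of linear maps then gives (a) for arbitrary $us$-curves $\gamma_1,\gamma_2$.

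For (b), first take $\gamma$ to be a $u$-path from $x$ to $y$, so $y\in W^u(x)$ and $f(y)\in W^u(f(x))$. The global formula in Proposition~\ref{propn:holonomies} gives
\[
H^u_{f(x),f(y)}(F)=\lim_{n\to\infty} F^n_{f^{-n+1}(y)}\circ (F^n_{f^{-n+1}(x)})^{-1}.
\]
The cocycle identities $F^{n}_{f^{-n+1}(y)} = F_y\circ F^{n-1}_{f^{-n+1}(y)}$ and $F^n_{f^{-n+1}(x)}=F_x\circ F^{n-1}_{f^{-n+1}(x)}$ turn this into $F_y\circ H^u_{x,y}(F)\circ F_x^{-1}$. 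Equivalently this is the invariance $H^u_{f(x),f(y)}\circ F_x = F_y\circ H^u_{x,y}$, which is item (b) of Proposition~\ref{propn:holonomies} with $n=1$ after relabelling. The $s$-path case is identical, using the stable limit formula.

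For a general $us$-path $\gamma=\gamma_k*\cdots*\gamma_1$ through points $x=x_0,x_1,\dots,x_k=y$, I would apply the single-segment identity to each piece and multiply. The intermediate factors $F_{x_i}$ and $F_{x_i}^{-1}$ telescope, leaving only the conjugation by $F_x$ and $F_y$. This is why the statement holds with the composition order fixed in (a).

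There is no real obstacle here. The only care needed is the bookkeeping of composition order and of which side $F_x$ versus $F_y^{-1}$ appears on. I expect the correct form to be $PCH_{f(\gamma)}=F_y\circ PCH_\gamma\circ F_x^{-1}$ as maps $\mathcal E_{f(x)}\to\mathcal E_{f(y)}$, with the displayed formula matching it under the paper's convention for reading $PCH$.
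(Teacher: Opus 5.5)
Your proposal is correct and is essentially the paper's argument. The paper gives no separate proof: it treats (a) as part of the definition of $PCH$ on concatenations and (b) as the equivariance of holonomies from Proposition~\ref{propn:holonomies}, which is exactly what you verify segment by segment and then telescope. You are also right that the display in (b) does not typecheck as written; the correct identity is $PCH_{f(\gamma)}(F)=F_y\circ PCH_\gamma(F)\circ F_x^{-1}$, which is the display with $\gamma$ and $f(\gamma)$ interchanged rather than something a reading convention can rescue, and it is the form the paper actually uses later, e.g.\ in $\rho^A_p(f(\gamma))=A_p\,\rho^A_p(\gamma)\,A_p^{-1}$.
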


\section{Periodic cycle holonomies and regularity} \label{sec:pcf}

In this section, we describe more properties of the unstable derivative cocycle $D^uf$ over $f$, and we also define the quadrilateral holonomy map. 

Let $L : \mathbb{T}^3 \rightarrow \mathbb{T}^3$ be a hyperbolic toral automorphism whose spectrum contains a pair of non-real complex conjugate eigenvalues, and let $f : \mathbb{T}^3 \rightarrow \mathbb{T}^3$ be a transitive $C^r$-Anosov diffeomorphism close to $L$ which satisfies our standing assumption that the unstable has dimension two. Our first proposition shows that as long as $f$ is close to $L$, the unstable derivative cocycle is fiber bunched.

\begin{proposition} \label{propn:fiberbunched}
	For every $0 < \theta < 1$, there exists a $C^{1}$-neighborhood $\mathcal{U}_{\theta}$ of $L$ and a constant $C > 0$ such that for every $C^{r}$-Anosov diffeomorphism $f \in \mathcal{U}_{\theta}$ and $a,b \in \T^{3}$, we have
  \[ 
    \|D_{a}^{u}f^{-n}\|  \|D_{b}^{u}f^{n}\| \leq C\theta^{-n} ,\quad \text{for all } n\geq 1
    \]
	In particular, the unstable derivative cocycle of every $f \in \mathcal{U}_{\theta}$ is fiber bunched for appropriate choice of $\theta$.
\end{proposition}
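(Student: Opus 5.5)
The plan is to reduce to the linear model, where the estimate is immediate, and then transfer it to $f$ by continuity of the unstable bundle and a block-product argument.

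\textbf{The linear model.} For $L$, the unstable bundle $E^u_L$ is the constant real eigenplane of the complex eigenvalues $\lambda,\bar\lambda$ with $|\lambda|>1$. In a suitable adapted norm, $D^uL$ acts on this plane as $|\lambda|$ times a rotation. Hence $\|D^u L^n\|=|\lambda|^n$ and $\|D^uL^{-n}\|=|\lambda|^{-n}$, so the product in the statement is identically $1$ for $L$.

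\textbf{Transfer to $f$.} Fix $\theta\in(0,1)$ and choose $N$ with $\|D^uL^N\|\,\|D^uL^{-N}\|\le 1$ in the fixed metric, up to a bounded constant $K$. Equivalently, choose $N$ large so that $K^{1/N}<\theta^{-1/2}$.

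By continuity of hyperbolic splittings under $C^1$ perturbation, $E^u_f$ is uniformly $C^0$-close to $E^u_L$ when $f$ is $C^1$-close to $L$. Hence $D^u_x f^{\pm N}$ is uniformly close to $D^uL^{\pm N}$ after identifying the nearby planes. In particular, we can arrange
\[\sup_a\|D^u_af^{-N}\|\le (1+\delta)|\lambda|^{-N}\quad\text{and}\quad \sup_b\|D^u_bf^{N}\|\le (1+\delta)|\lambda|^N,\]
working in the adapted norm, which is pulled back to $E^u_f$ via projection along $E^s$.

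\textbf{Submultiplicativity.} Write $n=kN+j$ with $0\le j<N$. By submultiplicativity,
\[\|D^u_bf^n\|\le (1+\delta)^k|\lambda|^{kN}M^j \quad\text{and}\quad \|D^u_af^{-n}\|\le (1+\delta)^k|\lambda|^{-kN}M^j,\]
where $M$ bounds $\|D^uf^{\pm1}\|$ uniformly on the neighborhood. Multiplying the two bounds gives
\[\|D^u_af^{-n}\|\,\|D^u_bf^{n}\|\le M^{2N}(1+\delta)^{2n/N}.\]
Choose $\delta$ so that $(1+\delta)^{2/N}<\theta^{-1}$. Then the product is bounded by $C\theta^{-n}$, with the constant $C$ also absorbing the comparison between the adapted norm and the Riemannian norm.

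A cleaner route may avoid $N$ altogether. With the adapted norm one can take $N=1$ directly: choose $\delta$ with $(1+\delta)^2<\theta^{-1}$, since $\|D^u_xf\|\le(1+\delta)|\lambda|$ and $\|D^u_xf^{-1}\|\le(1+\delta)|\lambda|^{-1}$ hold for $f$ $C^1$-close to $L$.

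\textbf{Main obstacle.} The only delicate point is the identification of the varying planes $E^u_f(x)$ with the fixed plane $E^u_L$. I would handle it using the projection along $E^s_L$, which is an isomorphism with distortion close to $1$.

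\textbf{Fiber bunching.} Take $a=b=x$ in the estimate. This gives
\[\|D^u_xf^n\|\,\|(D^u_xf^n)^{-1}\|\le C\theta^{-n}.\]
The fiber bunching condition requires this to be beaten by $\nu(x,n)^{\eta}$, which is at most $\nu_0^{n\eta}$ for some $\nu_0<1$. The stable contraction of $f$ is close to that of $L$, and the unstable cocycle is Hölder with some exponent $\eta$ that is uniform near $L$. So choosing $\theta$ close to $1$ with $\theta^{-1}\nu_0^{\eta}<1$ gives the first bunching inequality. The backward inequality follows in the same way, using the expansion bound $\mu$ from the stable direction.
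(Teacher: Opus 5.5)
Your proposal is correct, and your ``cleaner route'' with $N=1$ is essentially the paper's proof. The paper also takes a metric adapted to $E^u_L$ so that $L_u$ is $|\lambda|$ times a rotation, uses the one-step bounds $\|D^u_xf\|\le|\lambda|+\varepsilon$ and $\|D^u_xf^{-1}\|\le|\lambda|^{-1}+\varepsilon$ for $f$ $C^1$-close to $L$, applies submultiplicativity so the product is at most $\bigl((|\lambda|+\varepsilon)(|\lambda|^{-1}+\varepsilon)\bigr)^n<\theta^{-n}$, and absorbs norm equivalence into $C$; your $N$-block version is harmless but unnecessary, and your projection along $E^s_L$ makes explicit an identification the paper leaves implicit.

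There are two small slips. To obtain $\|D^u_xf^n\|\,\|(D^u_xf^n)^{-1}\|$ you should take $b=x$ and $a=f^n(x)$ rather than $a=b=x$; this costs nothing because your bound is uniform in $a,b$. Also, in the backward bunching inequality $\mu$ is the unstable expansion rate, not a stable one.
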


\begin{proof}
Let $E^u$ be the unstable bundle of $L$, let $L_u \coloneq L|_{E^u}$, and pass to a Riemannian metric which is adapted to $E^u$ in such a way so that $\|L_u^{-1}\| = \|L_u\|^{-1}$.
    Observe that for all $\varepsilon > 0$, we can find a $C^1$-neighborhood $\mathcal{U}_{\theta, \varepsilon}$ such that for all $f \in \mathcal{U}_{\theta, \varepsilon}$,
	\[\begin{split} \|D^u_a f^{-n}\| \|D^u_bf^n\| \theta^n &\leq (\|L_u\| + \varepsilon)^n(\|L_u\|^{-1} + \varepsilon)^n \theta^n \\
		& \leq (1 + (\|L_u\| + \|L_u\|^{-1})\varepsilon + \varepsilon^2)^n \theta^n. \end{split}\]
		Furthermore, note that for $\varepsilon$ sufficiently small, we have 
		\[ (1 + (\|L_u\| + \|L_u\|^{-1})\varepsilon + \varepsilon^2) \theta < 1.\]
		The first claim follows for appropriately small $\varepsilon$ and by using the fact that all norms are equivalent. By choosing appropriate $\theta$ and $\varepsilon$, it follows from the definition that the unstable derivative cocycle is fiber bunched.
\end{proof}

As a consequence of Proposition \ref{propn:holonomies}, we deduce that one can choose a $C^1$-neighborhood $\mathcal{U}$ of $L$ such that the stable and unstable holonomies are well-defined for all $C^r$-Anosov diffeomorphisms $f \in \mathcal{U}$. We are now interested in the regularity of these holonomies. 
The following is a well-known observation by Shub, and we include a short proof for the reader's convenience.
\begin{proposition}\label{propn:normalforms2}
    There exists a $C^{1}$-neighborhood $\mathcal{U}$ of $L$ such that for every $C^{r}$-Anosov diffeomorphism $f \in \mathcal{U}$ and $a \in \T^{3}$, the unstable holonomy map $W^u(a) \ni x \mapsto H^u_{a,x}(D^uf)$ is $C^{r-1}$-differentiable.

\end{proposition}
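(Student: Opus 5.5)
The plan is to identify the unstable holonomy $H^u_{a,x}(D^uf)$ with the derivative of the affine maps coming from the normal forms of Proposition \ref{propn:normalforms}. For $x \in W^u(a)$ set
\[ \mathcal{A}_{a,x} \coloneq D\left(\mathcal{H}_x^{-1}\circ \mathcal{H}_a\right) : E^u(a) \to E^u(x).\]
By Proposition \ref{propn:normalforms}(5), $\mathcal{H}_x^{-1}\circ\mathcal{H}_a$ is affine, so its derivative is a constant linear map, and $\mathcal{A}_{a,x}$ is well defined. I will then show that $\mathcal{A}$ satisfies properties (a)--(c) of Proposition \ref{propn:holonomies} for the cocycle $D^uf$. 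This is legitimate because Proposition \ref{propn:fiberbunched} makes $D^uf$ fiber bunched on a small $C^1$-neighborhood of $L$, and then the uniqueness in Proposition \ref{propn:holonomies} forces $\mathcal{A}_{a,x}=H^u_{a,x}(D^uf)$.

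The three properties are checked as follows.

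(a) This is immediate: $\mathcal{A}_{a,a}=D(\mathrm{Id})=\mathrm{Id}$, and the chain rule gives $\mathcal{A}_{y,z}\circ\mathcal{A}_{x,y}=\mathcal{A}_{x,z}$.

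(b) Equivariance needs the normal forms to conjugate $f$ to its linear part, i.e. $f\circ\mathcal{H}_x=\mathcal{H}_{f(x)}\circ D^u_xf$. This is part of the construction in \cite{sadovskaya2005uniformly}, and I would add it to the sketch of Proposition \ref{propn:normalforms}; at worst, uniqueness of the normal form family yields it. From this,
\[ \mathcal{H}_{f(x)}^{-1}\circ\mathcal{H}_{f(a)} = D^u_xf\circ(\mathcal{H}_x^{-1}\circ\mathcal{H}_a)\circ (D^u_af)^{-1}.\]
Differentiating gives $\mathcal{A}_{f(a),f(x)}=D^u_xf\circ\mathcal{A}_{a,x}\circ(D^u_af)^{-1}$, which is the $n=1$ case of (b); iterating gives the general case.

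(c) The H\"older estimate $\|\mathcal{A}_{a,x}-\mathrm{Id}\|\le c\,d(a,x)^\eta$ for $x\in W^u_{loc}(a)$ comes from property (4), that $D\mathcal{H}$ is Lipschitz along leaves, together with (3), $D_0\mathcal{H}_x=\mathrm{Id}$. Fix $v=\mathcal{H}_a^{-1}(x)$, so $|v|\lesssim d(a,x)$. Differentiating $\mathcal{H}_x\circ(\mathcal{H}_x^{-1}\circ\mathcal{H}_a)=\mathcal{H}_a$ at $w=v$, and using that $\mathcal{H}_x^{-1}\circ\mathcal{H}_a$ sends $v$ to $0$, we get
\[ D_0\mathcal{H}_x\circ\mathcal{A}_{a,x}=D_v\mathcal{H}_a, \qquad\text{so}\qquad \mathcal{A}_{a,x}=D_v\mathcal{H}_a .\]
Hence $\|\mathcal{A}_{a,x}-\mathrm{Id}\|=\|D_v\mathcal{H}_a-D_0\mathcal{H}_a\|\le \mathrm{Lip}\cdot|v|$. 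This expression is read through the identifications $I_{a,x}$, which are H\"older, so it holds up to an $O(d(a,x)^\beta)$ error. The result is a H\"older bound with $\eta\le\beta$, and the constants are uniform by (6).

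With the identification $H^u_{a,x}(D^uf)=D_{\mathcal{H}_a^{-1}(x)}\mathcal{H}_a$ in hand, regularity follows. The map $x\mapsto \mathcal{H}_a^{-1}(x)$ is $C^r$ along $W^u(a)$ by (1), and $w\mapsto D_w\mathcal{H}_a$ is $C^{r-1}$. Composing, $x\mapsto H^u_{a,x}(D^uf)$ is $C^{r-1}$ along $W^u(a)$. The global leaf is handled by the extension in Proposition \ref{propn:holonomies} together with (a), or directly, since the formula is global.

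I expect the main obstacle to be step (b). The statement of Proposition \ref{propn:normalforms} as written omits the conjugacy $f\circ\mathcal{H}_x=\mathcal{H}_{f(x)}\circ D^u_xf$, so I would need to verify that it comes out of the construction in \cite{sadovskaya2005uniformly} in this non-conformal but nearly conformal setting. A secondary point of care in (c) is making the bound intrinsic, comparing $D_v\mathcal{H}_a$ with $\mathrm{Id}$ through $I_{a,x}$ rather than as raw matrices.
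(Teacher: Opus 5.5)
Your proposal is correct and follows the same route as the paper. Both identify $H^u_{a,x}(D^uf)$ with the constant derivative $D(\mathcal{H}_x^{-1}\circ\mathcal{H}_a)$, check the holonomy axioms (composition; equivariance via the normal-form conjugacy $f\circ\mathcal{H}_x=\mathcal{H}_{f(x)}\circ D^u_xf$, which the paper also uses implicitly; the near-identity bound), invoke uniqueness from Proposition \ref{propn:holonomies}, and then read off $C^{r-1}$ regularity from Proposition \ref{propn:normalforms}, which your explicit formula $H^u_{a,x}(D^uf)=D_{\mathcal{H}_a^{-1}(x)}\mathcal{H}_a$ makes more transparent.
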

\begin{proof}
Let $\{\mathcal{H}_{x}: E^{u}(x) \to W^{u}(x)\}_{x \in \T^{3}}$ be the family of normal forms given in Proposition \ref{propn:normalforms}. We claim that for every $x,y \in \T^{3}$ with $y \in W^{u}(x)$ we can write $H^{u}_{x,y}(D^{u}f)=D_{0}(\mathcal{H}_{y}^{-1} \circ \mathcal{H}_{x})$. We will do so by checking that the derivative of the normal forms acts as a holonomy, and then use uniqueness of holonomies to deduce the result.

First, notice that the chain rule yields that for every $y,z \in W^u(x)$, we have
\[ D_{0}(\mathcal{H}_{z}^{-1}\circ \mathcal{H}_x)=D_{0}(\mathcal{H}_{z}^{-1}\circ \mathcal{H}_y\circ \mathcal{H}_{y}^{-1}\circ \mathcal{H}_x)=D_{0}
(\mathcal{H}_{z}^{-1}\circ \mathcal{H}_y)D_{0}(\mathcal{H}_{y}^{-1}\circ \mathcal{H}_x) .  \]
We also observe that for $y \in W^u(x)$ and $n \geq 0$, we can use Proposition \ref{propn:normalforms} to get
\[
    \begin{split}
 D_{0}(\mathcal{H}_{y}^{-1} \circ \mathcal{H}_{x}) &=  D_{0}((f^{-n}\circ \mathcal{H}_{y}  )^{-1}\circ (f^{-n} \circ \mathcal{H}_{x})) \\
 & =D_{0}((\mathcal{H}_{f^{n}(y)}\circ D^{u}_{y}f^{n})^{-1} \circ (\mathcal{H}_{f^{n}(x)} \circ D^{u}_{x}f^{n})) \\
 &= D_{0}((D^{u}_{y}f^{n})) ^{-1} \circ \mathcal{H}_{f^{n}(y)}^{-1} \circ \mathcal{H}_{f^{n}(x)} \circ D_x^{u}f^{n})\\
 &= (D^{u}_{y}f^{n}) ^{-1} D_0(\mathcal{H}_{f^{n}(y)}^{-1} \circ \mathcal{H}_{f^{n}(x)}) D^{u}_xf^{n}.
 \end{split}
\]
Finally, one more application of Proposition \ref{propn:normalforms} shows that there exists a uniform $c > 0$ such that for $x, y \in \mathbb{T}^3$ sufficiently close, we have
\[ \| D_{0}(\mathcal{H}_{y}^{-1}\circ \mathcal{H}_{x}) - \text{Id} \| \leq c d(x,y).\]
Therefore, by the uniqueness of holonomies in Proposition \ref{propn:holonomies}, we have $H^{u}_{x,y}(D^{u}f)=D_{0}(\mathcal{H}_{y}^{-1} \circ \mathcal{H}_{x})$, and Proposition \ref{propn:normalforms} shows that the map $x \mapsto H^{u}_{a \to x}(D^{u}(f))$ is $C^{r-1}$-differentiable.
\end{proof}

We define the \emph{stable holonomy} of $D^uf$ by
\[
W^u(a) \ni x \mapsto H^{s}_{x,\text{Hol}^s_{a,b}(x)}(D^{u}f).
\]
The aim is to show that this map is $C^1$ provided $f$ is $C^1$-close enough to $L$.
\begin{proposition}\label{prop:stableholonomyregularity}
    There exists a $C^{1}$-neighborhood of $L$ such that the stable holonomy map given by $x \mapsto H^{s}_{x,\text{Hol}^s_{a,b}(x)}(D^{u}f)$ is $C^{1}$.
\end{proposition}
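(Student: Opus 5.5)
The plan is to write the stable holonomy as a uniform limit of $C^1$ maps and to show that the derivatives of these maps converge uniformly. Here $b \in W^s(a)$, $\text{Hol}^s_{a,b} : W^u(a) \to W^u(b)$ is the stable foliation holonomy, and we set $y(x) \coloneq \text{Hol}^s_{a,b}(x)$. By Proposition \ref{propn:holonomies},
\[ H^s_{x,y(x)}(D^uf) = \lim_{n\to\infty} (D^u_{y(x)} f^n)^{-1} \circ D^u_x f^n =: \lim_{n\to\infty} \Phi_n(x). \]
Each $\Phi_n$ is $C^1$ along $W^u(a)$, for three reasons:
\begin{itemize}
\item $E^u$ is $C^{(3/2)-\varepsilon}$ near $L$, so $x\mapsto D^u_xf$ is $C^1$ along unstable leaves.
\item $f$ is $C^r$ with $r \ge 2$.
\item The stable holonomy $\text{Hol}^s_{a,b}$ is $C^{3-\varepsilon}$ (in particular $C^1$), since the stable foliation is $C^{3-\varepsilon}$ for $f$ near $L$.
\end{itemize}
To make the fibers comparable, I would trivialize $E^u$ near the relevant points (e.g.\ via the identifications $I_{x,y}$, or via the normal forms of Proposition \ref{propn:normalforms}), which are themselves $C^1$ along leaves.

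I would instead work with the telescoping form $\Phi_{n+1}-\Phi_n = (D^u_{y}f^n)^{-1}\big[(D^u_{f^ny}f)^{-1}D^u_{f^nx}f - \text{Id}\big]D^u_xf^n$. The bracket is $O(d(f^nx,f^ny)) = O(\mu_-^{-n})$, since $f^nx$ and $f^ny$ lie on a common stable leaf. The conjugating factors contribute at most $C\theta^{-n}$ by Proposition \ref{propn:fiberbunched}. Choosing $\theta$ close to $1$, the series $\sum (\Phi_{n+1}-\Phi_n)$ converges uniformly, which recovers continuity.

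For $C^1$, I differentiate each term in $x \in W^u(a)$. The derivative hits one of three things.
\begin{enumerate}
\item \textbf{The products $D^u_xf^n$ and $(D^u_yf^n)^{-1}$.} By the chain rule, differentiating along $W^u$ produces sums $\sum_{k<n}$ of terms in which $D_{f^kx}(D^uf)$ is applied to the vector $D_xf^k v$. That vector has size up to $\nu_+^k$, because we differentiate along the unstable direction, and it is transported by $\text{Hol}^s$ at $y$ as well.
\item \textbf{The bracket.} Its derivative involves $D(D^uf)$ evaluated at $f^nx$ and $f^ny$, in directions $D f^n v$ and $D f^n (D\text{Hol}^s v)$, of size about $\nu_+^n$. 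It is multiplied against the difference structure, which gives smallness of order $\mu_-^{-n}$ only after careful comparison.
\item \textbf{The conjugating factors.} Their size is again controlled by $C\theta^{-n}$.
\end{enumerate}

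The key quantitative requirement is therefore a convergent geometric series, roughly
\[ \sum_n \theta^{-n}\,\nu_+^{n}\,\mu_-^{-n} < \infty, \]
together with companion bounds for the terms arising from differentiating the products. For $L$ itself, $\nu_+ = |\lambda|$ with $|\lambda|^2 = \mu^{-1}$, where $\mu<1$ is the stable eigenvalue. Hence $\nu_+\mu_-^{-1} \approx |\lambda|^{-1} < 1$, and this persists under $C^1$-small perturbation with $\theta$ close to $1$. The same bunching computation underlies the $C^{3-\varepsilon}$ regularity of the stable foliation.

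The main obstacle is organizing the derivative of the products cleanly. A naive bound gives terms like $n\,\nu_+^n\theta^{-n}$ with no $\mu_-^{-n}$ decay, which diverge. To avoid this, I would first rewrite $\Phi_n$ using the unstable holonomy of Proposition \ref{propn:normalforms2}. We have $D^u_xf^n = H^u_{f^na,f^nx}\,D^u_af^n\,H^u_{x,a}$, and similarly at $y$ with base point $b$. Then
\[ \Phi_n(x) = H^u_{y,b}\,(D^u_b f^n)^{-1}\,\big[H^u_{f^n y, f^n b}\, H^u_{f^n a, f^n x}\big]\, D^u_a f^n\, H^u_{x,a}. \]
In this form all $x$-dependence sits in the $C^{r-1}$ holonomies. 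The outer factors are fixed-point-based and $C^1$ by Proposition \ref{propn:normalforms2}. The inner bracket is $\text{Id}+O(\mu_-^{-n})$ by stable contraction, and its derivative in $x$ is of order $\nu_+^n$ times the Lipschitz constant of $H^u$ along $W^u$ from Proposition \ref{propn:normalforms}. Conjugating by $D^u_af^n$ costs a factor $\theta^{-n}$. So the derivative of the bracket term is $O(\theta^{-n}\nu_+^n\mu_-^{-n})$, but only once the bracket derivative is shown to retain the $\mu_-^{-n}$ decay by comparing the two holonomies along the stable leaf. That comparison is the delicate point, and I would try to get it from the Hölder dependence in item (6) of Proposition \ref{propn:normalforms}, taking the neighborhood small enough that the exponents close up.
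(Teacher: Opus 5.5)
Your proposal has a genuine gap. The estimate you call ``the delicate point'' is the whole content of the proposition, and the heuristic you give for it is wrong. The bracket $H^u_{f^ny,f^nb}H^u_{f^na,f^nx}$ is not $\text{Id}+O(\mu_-^{-n})$ in general. Its unstable legs have length of order $\nu^n$, so stable contraction alone says nothing about it. By equivariance, $H^u_{f^np,f^nq}=D^u_qf^n\,H^u_{p,q}\,(D^u_pf^n)^{-1}$, so the bracket equals $D^u_bf^n\,H^u_{y,b}\,\Phi_n(x)\,H^u_{a,x}\,(D^u_af^n)^{-1}$. Replacing $\Phi_n(x)$ by its limit $H^s_{x,y}$ shows that, up to an exponentially small error, it is
\[ D^u_bf^n\, Q(x)\,(D^u_bf^n)^{-1}\,H^s_{f^na,f^nb}, \qquad Q(x)=H^s_{a,b}\,\Phi_{a,b}(x)^{-1}\,H^s_{b,a}, \]
where $\Phi_{a,b}$ is the quadrilateral holonomy map. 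Conjugating back by $D^u_bf^n$ costs only $\theta^{-n}$. So if the bracket really were $\text{Id}+O(\mu_-^{-n})$, you would get $\Phi_{a,b}\equiv\text{Id}$. That fails as soon as $D^uf$ has nontrivial quadrilateral holonomy, which is exactly the situation Theorem \ref{thm:main} addresses. Consequently the $\mu_-^{-n}$ decay of $\partial_x$ of the bracket cannot come from ``comparing the two holonomies along the stable leaf.'' Item (6) of Proposition \ref{propn:normalforms} gives only H\"older dependence, with an uncontrolled exponent, on bounded pieces of leaves. It cannot supply such a rate for holonomies along unstable paths of length $\nu^n$. As written, uniform convergence of $D\Phi_n$ is not established.

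The missing idea is that the limit of $\Phi_n$ is itself a derivative, so you never need to differentiate $\Phi_n$. Since $\text{Hol}^s_{a,b}=f^{-n}\circ\text{Hol}^s_{f^na,f^nb}\circ f^n$, the chain rule gives $D_x\text{Hol}^s_{a,b}=(D^u_yf^n)^{-1}\,D_{f^nx}\text{Hol}^s_{f^na,f^nb}\,D^u_xf^n$. The stable foliation is uniformly $C^2$ near $L$, so $\|D_{f^nx}\text{Hol}^s_{f^na,f^nb}-I_{f^nx,f^ny}\|$ decays exponentially. Proposition \ref{propn:fiberbunched}, with $\theta$ close to $1$, makes the conjugating factors grow more slowly than that. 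Hence $D_x\text{Hol}^s_{a,b}=\lim_n\Phi_n(x)=H^s_{x,\text{Hol}^s_{a,b}(x)}(D^uf)$. The $C^1$ regularity of the stable holonomy of $D^uf$ is then just the $C^2$ regularity of $\text{Hol}^s_{a,b}$ along unstable leaves, which you already listed as an ingredient. This is the paper's proof.

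Note also that your summability condition $\sum_n\theta^{-n}\nu_+^n\mu_-^{-n}<\infty$, with $\theta^{-1}\approx\nu_+/\nu_-$, is exactly $b^s(f)>2$, i.e.\ $C^2$ regularity of the stable foliation. So your direct route would at best re-derive by hand what this identification gives for free.
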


\begin{proof}
It is enough to show that the map is regular on $W^{u}_{loc}(a)$. Notice that for every $n \geq 0$, we have $\text{Hol}^s_{a,b}=f^{-n} \circ \text{Hol}^s_{f^{n}(a),f^{n}(b)} \circ f^{n}$, which by the chain rule implies 
\[
D_{x}\text{Hol}^s_{a,b}=(D_{\text{Hol}^s_{a,b}(x)}^{u}f^{n})^{-1} \circ D_{f^{n}(x)}  \text{Hol}^s_{f^{n}(a),f^{n}(b)} \circ D^{u}_{x}f^{n}.
\]
Now if $I_{f^{n}(x),\text{Hol}^s_{f^{n}(a),f^{n}(b)}(f^{n}(x))}$ are the identifications along the sequence of points $f^{n}(x)$ and $\text{Hol}^s_{f^{n}(a), f^{n}(b)}(f^{n}(x))$ for $n$ large, we have 
\begin{equation*}
\begin{split}
\|D_{x}\text{Hol}^s_{a,b}-&(D^u_{\text{Hol}^s_{a,b}(x)}f^{n})^{-1} \circ I_{f^{n}(x), \text{Hol}^s_{f^n(a), f^n(b)}(f^n(x))} \circ D^{u}_{x}f^{n}\| \\
&\leq \|(D^{u}_{\text{Hol}^s_{a,b}(x)}f^{n})^{-1}\|\cdot \|D_{f^{n}(x)}\text{Hol}^s_{f^{n}(a),f^{n}(b)}-I_{f^{n}(x), \text{Hol}^s_{f^n(a), f^n(b)}(f^n(x)) }\|\cdot  \|D^{u}_{x}f^{n}\|.
\end{split}
\end{equation*}
Fix $0 < \theta < 1$ so that $\sup_{x\in \mathbb{T}^{3}} \|D_{x}f|_{E^{s}(x)}\| < \theta$. Recall that by taking the $C^1$ neighborhood small, the stable holonomy is uniformly $C^2$ and the identifications are $\beta$-H\"older continuous for some $0 < \beta < 1$. Consequently, using the fact that $a \in W^s(b)$, there is a $C>0$ such that for all $n \geq 0$ 
\[
\|D_{f^{n}(x)}\text{Hol}^s_{f^{n}(a),f^{n}(b)}-I_{f^n(x), \text{Hol}^s_{f^n(a), f^n(b)}(f^n(x))}\| \leq C\theta^{\beta n}.
\]

Furthermore, adjusting $C$ if necessary, we can use Proposition \ref{propn:fiberbunched} to guarantee the following for every $n$ large:
\[
\|(D^{u}_{\text{Hol}^s_{a,b}(x)}f^{n})^{-1}\|\cdot \|D^{u}_{x}f^{n}\| \leq C \theta^{-\frac{n\beta }{2}}.
\]
Combining the above, we deduce that 
\[ \|D_{x}\text{Hol}^s_{a,b}-(D^u_{\text{Hol}^s_{a,b}(x)}f^{n})^{-1} \circ I_{f^{n}(x), \text{Hol}^s_{f^n(a), f^n(b)}(f^n(x))} \circ D^{u}_{x}f^{n}\| \leq C^2\theta^{\frac{n\beta }{2}},\]
which goes to $0$ as $n \to \infty$. We conclude that $D_{x}\text{Hol}^s_{a,b} = H^{s}_{x, \text{Hol}^s_{a,b}} (D^{u}f)$, implying the statement.
\end{proof}

Now that we have shown that the holonomies have a certain degree of regularity, we will define the smooth structures that are going to be matched under the conjugate dynamics. For a pair of points $a,b \in \T^{3}$ in the same stable leaf, we define the \emph{quadrilateral holonomy map} 
\[ \Phi_{a,b}: W^u_{loc}(a) \rightarrow \text{GL}(E^u(a)), \quad \Phi_{a,b}(x) \coloneq H_{x,a}^{u}(D^{u}f) \, H_{\text{Hol}^s_{a,b}(x),x}^{s}(D^{u}f)\, H_{b, \text{Hol}^s_{a,b}(x)}^{u}(D^{u}f)\, H_{a,b}^{s}(D^{u}f) 
\] 
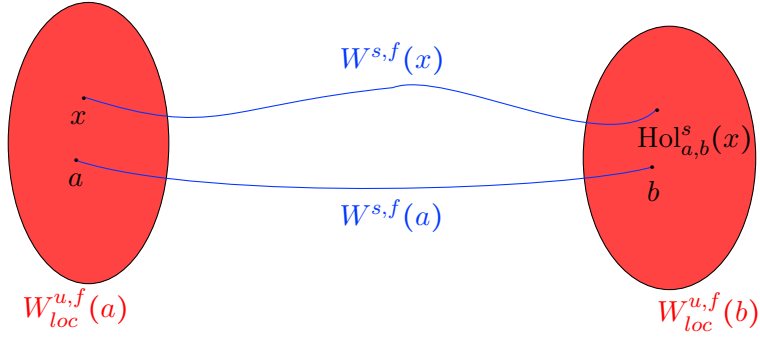
\begin{figure}[H]
\begin{center}
    \begin{tikzpicture}[x=0.75pt,y=0.75pt,yscale=-1,xscale=1]

\draw  [fill={rgb, 255:red, 255; green, 14; blue, 14 }  ,fill opacity=0.78 ] (404,144.56) .. controls (404,107.84) and (423.48,78.08) .. (447.5,78.08) .. controls (471.52,78.08) and (491,107.84) .. (491,144.56) .. controls (491,181.27) and (471.52,211.04) .. (447.5,211.04) .. controls (423.48,211.04) and (404,181.27) .. (404,144.56) -- cycle ;
\draw  [fill={rgb, 255:red, 255; green, 14; blue, 14 }  ,fill opacity=0.78 ] (114,137.06) .. controls (114,97.86) and (132.13,66.08) .. (154.5,66.08) .. controls (176.87,66.08) and (195,97.86) .. (195,137.06) .. controls (195,176.26) and (176.87,208.04) .. (154.5,208.04) .. controls (132.13,208.04) and (114,176.26) .. (114,137.06) -- cycle ;
\draw [color={rgb, 255:red, 0; green, 51; blue, 255 }  ,draw opacity=1 ][fill={rgb, 255:red, 0; green, 0; blue, 0 }  ,fill opacity=0 ]   (149,146.08) .. controls (215,167.08) and (397,161.08) .. (439,149.08) ;
\draw [color={rgb, 255:red, 0; green, 51; blue, 255 }  ,draw opacity=1 ][fill={rgb, 255:red, 0; green, 0; blue, 0 }  ,fill opacity=0 ]   (153,114.08) .. controls (219,135.08) and (222,118.08) .. (308,109.08) .. controls (335,99.08) and (416,145.08) .. (441,120.08) ;

\draw (143,151) node [anchor=north west][inner sep=0.75pt]    {$a$};
\draw (435,155) node [anchor=north west][inner sep=0.75pt]    {$b$};
\draw (144.5,143.5) node [anchor=north west][inner sep=0.75pt]  [font=\Large]  {$.$};
\draw (435,147) node [anchor=north west][inner sep=0.75pt]  [font=\Large]  {$.$};
\draw (280,164) node [anchor=north west][inner sep=0.75pt]  [color={rgb, 255:red, 1; green, 56; blue, 255 }  ,opacity=1 ]  {$W^{s,f}( a)$};
\draw (120,209) node [anchor=north west][inner sep=0.75pt]  [color={rgb, 255:red, 255; green, 0; blue, 0 }  ,opacity=1 ]  {$W_{loc}^{u,f}( a)$};
\draw (440,211) node [anchor=north west][inner sep=0.75pt]  [color={rgb, 255:red, 255; green, 0; blue, 0 }  ,opacity=1 ]  {$W_{loc}^{u,f}( b)$};
\draw (144,120) node [anchor=north west][inner sep=0.75pt]    {$x$};
\draw (148.4,112) node [anchor=north west][inner sep=0.75pt]  [font=\Large]  {$.$};
\draw (430,127) node [anchor=north west][inner sep=0.75pt]    {$\text{Hol}^{s}_{a,b}(x)$};
\draw (437.5,118) node [anchor=north west][inner sep=0.75pt]  [font=\Large]  {$.$};
\draw (280,85) node [anchor=north west][inner sep=0.75pt]  [color={rgb, 255:red, 1; green, 56; blue, 255 }  ,opacity=1 ]  {$W^{s,f}( x)$};

\end{tikzpicture}
\caption{An illustration of the quadrilateral}
\end{center}
\end{figure}

Notice that the above map can be extended uniquely to a continuous map to the global unstable manifold $W^{u}(a)$ using an extension of the stable holonomy map $\text{Hol}^s_{a,b}$. The definition of this map is inspired by the work of Katok and Konenko along with the work of Gogolev and Rodriguez Hertz \cite{katok1996cocycles, GRH}. The following is a consequence of the previous study on the regularity of holonomies.

\begin{proposition} \label{propn:regholonomy}
For every $a,b \in \mathbb{T}^{3}$ in the same stable leaf, the quadrilateral holonomy map $\Phi_{a,b}$ is $C^{1}$.
\end{proposition}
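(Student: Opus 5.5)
The plan is to write $\Phi_{a,b}$ as a product of four factors and check that each one depends $C^1$ on $x \in W^u_{loc}(a)$; the product of $C^1$ matrix-valued maps is then $C^1$. In local trivializations of $E^u$ along $W^u_{loc}(a)$ and $W^u_{loc}(b)$, composition of fibre maps becomes matrix multiplication, so it suffices to treat the factors one at a time.

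The factors are handled as follows.

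\begin{enumerate}
\item The last factor, $H^s_{a,b}(D^uf)$, does not depend on $x$.
\item For the first factor, $x \mapsto H^u_{x,a}(D^uf) = (H^u_{a,x}(D^uf))^{-1}$, we use Proposition \ref{propn:holonomies}(a). Since $r \geq 2$, Proposition \ref{propn:normalforms2} gives that $x \mapsto H^u_{a,x}(D^uf)$ is $C^{r-1} \subseteq C^1$. Inversion is smooth on $\mathrm{GL}$, so this factor is $C^1$.
\item For the third factor, $x \mapsto H^u_{b,\mathrm{Hol}^s_{a,b}(x)}(D^uf)$, we compose the $C^{r-1}$ map $y \mapsto H^u_{b,y}(D^uf)$ on $W^u(b)$ (again Proposition \ref{propn:normalforms2}) with the stable holonomy $\mathrm{Hol}^s_{a,b}\colon W^u_{loc}(a)\to W^u(b)$. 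This holonomy is $C^1$ because the stable foliation is $C^{3-\varepsilon}$ near $L$; alternatively, its derivative was identified with $H^s(D^uf)$ in the proof of Proposition \ref{prop:stableholonomyregularity}.
\item For the second factor, $x \mapsto H^s_{\mathrm{Hol}^s_{a,b}(x),x}(D^uf) = (H^s_{x,\mathrm{Hol}^s_{a,b}(x)}(D^uf))^{-1}$, Proposition \ref{prop:stableholonomyregularity} shows that $x\mapsto H^s_{x,\mathrm{Hol}^s_{a,b}(x)}(D^uf)$ is $C^1$, and inversion preserves this.
\end{enumerate}

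The main obstacle is making sense of ``$C^1$'' for maps whose values lie in different fibres ($E^u(x)$ or $E^u(\mathrm{Hol}^s_{a,b}(x))$) before they are composed into an element of $\mathrm{GL}(E^u(a))$. The individual factors are only $C^1$ relative to a trivialization of $E^u$ along the unstable leaves, and $E^u$ is only $C^{3/2-\varepsilon}$ transversally. Along a single unstable leaf, however, $E^u$ is the tangent bundle of the $C^r$ leaf, so it is $C^{r-1}$ there.

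To handle this, trivialize $E^u|_{W^u_{loc}(a)}$ by the $C^{r-1}$ frame $x\mapsto H^u_{a,x}(D^uf)$, and trivialize $E^u|_{W^u_{loc}(b)}$ likewise by $H^u_{b,\cdot}(D^uf)$. In these frames the first and third factors become the identity. The whole map therefore reduces to
\[
\Phi_{a,b}(x) = \bigl[H^u_{b,\mathrm{Hol}^s_{a,b}(x)}(D^uf)^{-1}\, H^s_{x,\mathrm{Hol}^s_{a,b}(x)}(D^uf)\, H^u_{a,x}(D^uf)\bigr]^{-1} H^s_{a,b}(D^uf),
\]
and the bracket is exactly the stable holonomy expressed in these frames, which is the quantity shown to be $C^1$ in Proposition \ref{prop:stableholonomyregularity}. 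Finally, the extension of $\Phi_{a,b}$ to all of $W^u(a)$ is $C^1$ by the same argument applied locally along the extended stable holonomy.
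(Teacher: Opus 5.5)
Your proposal is correct and follows the paper's approach. The paper's proof simply combines the normal-form regularity of the unstable holonomies (Propositions \ref{propn:normalforms} and \ref{propn:normalforms2}) with the $C^1$ regularity of the stable holonomy term from Proposition \ref{prop:stableholonomyregularity}, which is exactly your factor-by-factor argument; your trivialization of $E^u$ along $W^u_{loc}(a)$ and $W^u_{loc}(b)$ by the $C^{r-1}$ frames $H^u_{a,\cdot}(D^uf)$ and $H^u_{b,\cdot}(D^uf)$ is a sound way of making precise the sense of ``$C^1$'', which the paper leaves implicit.
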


\begin{proof}
Apply Propositions \ref{propn:normalforms} and \ref{prop:stableholonomyregularity} in order to obtain the $C^{1}$ regularity of $\Phi_{a,b}$ along the unstable leaf of $a$.
\end{proof}

\section{Proof of Theorem \ref{thm:main}} \label{sec:proof}

From now on, we will be working with pairs of Anosov diffeomorphisms $f,g : \mathbb{T}^3 \rightarrow \mathbb{T}^3$ satisfying the standing assumption that the unstable has dimension two. To indicate the correct object for the corresponding diffeomorphism, we will use a superscript with the corresponding letter, e.g., the unstable bundle for $f$ will be denote by $E^{u,f}$.
The aim of this section is to prove Theorem \ref{thm:main} assuming two things. First, we assume the following ``matching theorem,'' which will proven in Section \ref{sec:parry}.

\begin{theorem}[Matching theorem] \label{thm:matchingtheorem}
    Let $L: \mathbb{T}^{3} \to \mathbb{T}^{3}$ be an Anosov toral automorphism with a pair of non-real complex conjugate eigenvalues. There exists a $C^{1}$-neighborhood $\mathcal{U}$ of $L$ such that if $f,g \in \mathcal{U}$ are a pair of smooth Anosov diffeomorphisms with non-constant matching periodic data and are such that the SRB and measure of maximal entropy for $f$ coincide, then there exists a H\"older bundle map ${C: E^{u,f} \to E^{u,g}}$ such that on the fibers we have $C_x : E^{u,f}_x \rightarrow E^{u,g}_{h(x)}$ and $C$ conjugates the unstable cocycles in the following sense:
    \[ D_x^{u}f=(C_{f(x)})^{-1}\, (D_{h(x)}^{u}g) \, C_{x}.\]
\end{theorem}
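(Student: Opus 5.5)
We plan to transport both unstable derivative cocycles to a common base and reduce to a rigidity statement for fiber bunched $\text{SL}^{\pm}(2,\mathbb{R})$-cocycles with matching traces. First, pull back $D^ug$ via $h$: set $G_x \coloneq D^u_{h(x)}g$, which is a Hölder cocycle over $f$ on the bundle $h^*E^{u,g}$. Then normalize determinants. Since the SRB measure of $f$ equals its measure of maximal entropy, Proposition~4.5 of Bowen, recalled in the preliminaries, gives a Hölder $u$ with $\log|\det D^u_xf| = u(f(x))-u(x)+c$ for a constant $c$ (the topological entropy). Matching periodic data gives $\det D^u_pf^n=\det D^u_{h(p)}g^n$ at every periodic point. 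By Livšic, $\log|\det G|$ is then cohomologous to the same constant. Hence both cocycles can be rescaled by Hölder scalar functions $e^{-(u\circ f-u)/2}e^{-c/2}$ (and the analogue for $g$) to $\text{SL}^{\pm}(2,\mathbb{R})$-valued cocycles $A$ and $B$. These remain fiber bunched by Proposition~\ref{propn:fiberbunched}, and they have conjugate periodic data, so in particular $\operatorname{tr}A^n_p=\operatorname{tr}B^n_p$ at all periodic $p$. A conjugacy between $A$ and $B$ then undoes the rescaling to give the required $C$.

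Next, we upgrade trace matching at periodic orbits to trace matching along $us$-loops. This is what the Parry monoid is designed for. Fix a periodic base point $p$. For a $us$-loop $\gamma$ at $p$ built from holonomy paths, Proposition~\ref{propn:anosov-closing} with $m$ copies lets us shadow $\gamma$ together with long periodic excursions by periodic orbits. Using Proposition~\ref{propn:pcf} and the exponential estimates from Proposition~\ref{propn:holonomies}(c), $A^{N}_{q}$ along such a periodic orbit is approximated by words $A^{n}_p\,PCH_{\gamma}(A)\cdots$. Because $A$ and $B$ are conjugated by $h$ (loops for $f$ map to loops for $g$), matching traces of all these words should pass to the limit. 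The conclusion we aim for is that the representations $\rho_A,\rho_B:\text{Parry monoid}\to \text{SL}^{\pm}(2,\mathbb{R})$, generated by $A^n_p$ and the $PCH_\gamma$, have equal traces on every element.

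Then we apply the dichotomy of Theorem~\ref{thm:alternativetheorem}, in the spirit of Cekić--Lefeuvre. If $\rho_A$ is irreducible, equality of characters for $\text{SL}_2$ representations gives $C_p\in \text{GL}(2,\mathbb{R})$ with $\rho_B=C_p\rho_AC_p^{-1}$. We then spread $C_p$ to all points by $C_x\coloneq PCH_{\eta}(B)\,C_p\,PCH_{\eta}(A)^{-1}$ for any $us$-path $\eta$ from $p$ to $x$, using accessibility. This is well-defined exactly because of the loop conjugacy. Invariance $B_xC_x=C_{f(x)}A_x$ follows from Proposition~\ref{propn:pcf}(b), and Hölder continuity follows from Proposition~\ref{propn:holonomies}(c) together with local product structure. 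If instead $\rho_A$ is reducible or elementary, the alternative is that one cocycle is an almost coboundary. That forces constant periodic data (eigenvalues of $A^n_p$ equal to $e^{nc'}$ times a fixed rotation/reflection pattern), which contradicts the non-constant hypothesis.

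The main obstacle is the middle step. It requires showing that trace equality at periodic points really yields equality of traces on all $us$-loop words, with enough uniformity to pass to the limit. It also requires handling the complex/rotation nature of the $\text{SL}_2$ cocycle near $L$, where periodic matrices are elliptic-like after normalization. I would first try choosing words $A^{n}_p\,PCH_\gamma$ with $n$ along a subsequence on which $A^n_p$ converges in $\text{SL}^\pm(2,\mathbb{R})$ modulo compact parts. I would then use trace identities $\operatorname{tr}(XY)+\operatorname{tr}(XY^{-1})=\operatorname{tr}X\operatorname{tr}Y$ to isolate $\operatorname{tr}PCH_\gamma$.
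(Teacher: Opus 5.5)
Your outline has the same architecture as the paper's proof. You normalize to $\text{SL}^{\pm}(2,\mathbb{R})$ using SRB$=$MME and Liv\v{s}ic, upgrade periodic trace matching to the Parry monoid via the multi-orbit closing lemma and the ellipticity of $A_p$, get $C_p$ from equality of characters, extend, and exclude the almost-coboundary alternative by non-constancy. However, your extension step needs more than your middle step can deliver.

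Setting $C_x := PCH_\eta(B)\,C_p\,PCH_\eta(A)^{-1}$ for an \emph{arbitrary} $us$-path $\eta$ from $p$ to $x$ is path-independent only if $C_p$ intertwines the holonomies of $A$ and $B$ along \emph{every} $us$-loop at $p$. The closing-lemma argument only reaches products of homoclinic loops $p \xrightarrow{u} z \xrightarrow{s} p$ with $z \in W^u(p)\cap W^s(p)$. For those, $PCH_\gamma(A)=\lim_n (A^n_p)^{-1}A^{2n}_{f^{-n}(z)}(A^n_p)^{-1}$ is a limit of products along orbit segments that start and end near $p$, which is exactly what a periodic orbit can shadow. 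For a loop $p \xrightarrow{u} x \xrightarrow{s} y \xrightarrow{u} w \xrightarrow{s} p$ with $x \notin W^s(p)$, the holonomy is assembled from limits along the orbits of the corners, which need not return near $p$. There is then no near-closed orbit segment to shadow, so your claim that Proposition~\ref{propn:anosov-closing} handles an arbitrary $us$-loop fails. What you actually get is intertwining on the Parry group only, and the well-definedness of $C_x$ is unjustified.

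The missing ingredient is Lemma~\ref{lem:extension} (Sadovskaya's Proposition 4.7), which needs only $C_pA_pC_p^{-1}=B_p$ and intertwining on homoclinic loops. The point is that one never needs non-homoclinic loops. The dense set $W^u(p)\cap W^s(p)$ is closed under local brackets, since $W^u_{loc}(z)\cap W^s_{loc}(z')$ is again homoclinic to $p$. A $us$-loop all of whose corners are homoclinic factors into holonomies of homoclinic loops and their inverses. So one defines $C$ on homoclinic points, proves a uniform H\"older estimate there, and extends by density. Alternatively, you could show directly that every $us$-loop is a limit of homoclinic-cornered loops, but some such argument must be supplied.

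Second, the branch ``if $\rho_A$ is reducible or elementary, then one cocycle is an almost coboundary'' is asserted without argument, and reducibility alone does not give it. The paper closes this with Proposition~\ref{propn:irreduciblegp}. Since $f(p)=p$ and $f$ permutes homoclinic loops at $p$, $G_p$ is normalized by $A_p$. With $A_p$ elliptic and $0<|\text{Tr}(A_p)|<2$, this forces $G_p$ to be trivial or irreducible. A trivial $G_p$ gives an almost coboundary by Proposition~\ref{propn:KK}, i.e.\ Lemma~\ref{lem:extension} applied to $A$ and the constant cocycle $A_p$. Irreducibility is also what drives the paper's Schur-lemma step recovering $C_pA_pC_p^{-1}=B_p$.

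Incidentally, because you put $A^n_p$ among your generators, your enlarged representation is automatically irreducible over $\mathbb{R}$, since $A_p$ has no real eigenline. The reducible branch would then be vacuous for you, and equality of characters of these semisimple representations would directly give a $C_p$ that also conjugates $A_p$ to $B_p$. The price is proving trace equality for words $A_p^K\rho(\gamma)$. That needs periodic orbits with an extra dwell of $K$ steps near $p$, i.e.\ a version of Proposition~\ref{propn:anosov-closing} with unequal segment lengths.
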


We will additionally assume that there exists $a,b \in \mathbb{T}^3$ in the same stable leaf such that the quadrilateral holonomy map $\Phi^f_{a,b}$ is non-constant. Otherwise, every periodic cycle holonomy of $D^uf$ will be identity, which, as we will show in Proposition \ref{propn:KK}, implies the pair has constant matching periodic data. With both of these, we have the tools necessary to prove Theorem \ref{thm:main}.

First, notice that Theorem \ref{thm:matchingtheorem} implies that if $f,g$ are $C^r$-Anosov diffeomorphisms in a sufficiently small $C^{1}$ neighborhood of $L$ with non-constant matching periodic data, then for every $a,b \in \T^{3}$ in the same stable leaf of $f$, the periodic holonomy cocyles match in the following sense:
    \[
    \Phi^f_{a,b}= (C_a)^{-1}\, \Phi^{g}_{h(a),h(b)}\, C_a. 
    \]
This constitutes a strong form of matching, since for $f$ and $g$ in a $C^1$ neighborhood of $L$, the quadrilateral holonomy maps are well-defined and are $C^1$ due to Proposition 
\ref{propn:regholonomy}. We now mimic the scheme of \cite{GRH}. 
In particular, since there is an $a,b \in \mathbb{T}^3$ which lie in the same stable leaf such that $\Phi^f_{a,b}$ is non-constant, there exists a neighborhood $V \subset W_{loc}^{u,f}(a)$ such that $D_x\Phi^f_{a,b} \neq 0$ for every $x \in V$. 

Let $p \in \T^{3}$ be a fixed point for $f$ such that $D_x^{u}f$ has a pair of complex non-real conjugate eigenvalues. Notice that such point always exists when we consider diffeomorphisms in a $C^{1}$ neighborhood of $L$. Due to the minimality of the stable foliation $W^{s,f}$, there exists $z \in V$ such that $p \in W^{s,f}(z)$. Let $c=\text{Hol}^s_{p,z}(a) \in W^{u}(a)$.
We now adapt the arguments in the commutative setting to the non-commutative setting. We say that a quadruple of points $x_{1}, x_{2}, x_{3}, x_{4} \in \T^{3}$ forms a \emph{$us$-quadrilateral} for $f$ if $x_{2} \in W^{s}(x_{1}) \cap W^{u}(x_{3}) $ and $x_{4} \in W^{u}(x_{1}) \cap W^{s}(x_{3})$. For $f$ $C^1$-close to $L$ and a $us$-quadrilateral $(x_1,x_2,x_3,x_4)$ for $f$ we write 
$$
H_f(x_1,x_2,x_3,x_4)=H_{x_4,x_1}^{u}(D^{u}f) \, H_{x_3,x_4}^{s}(D^{u}f) \, H_{x_2,x_3}^{u}(D^{u}f) \, H_{x_1,x_2}^{s}(D^{u}f).
$$
Note that we are only introducing $H_f$ for notational ease -- in application, we will just be using $H_f$ for the ordered points $a$, $c$, $b$, $\text{Hol}^s_{a,b}(x)$, $(\text{Hol}^s_{p,z})^{-1}(x)$, which gives rise to $\Phi^f_{a,b}$.

The following will be useful.
\begin{lemma} \label{lem:quadhol}
    If $x_{1},x_{2},x_{3},x_{4},x_{5},x_{6} \in \T^{3}$ are six points such that the quadruples $(x_{1},x_{2},x_{5},x_{6})$ and $(x_{2},x_{3},x_{4},x_{5})$ form $us$-quadrilaterals for $f$, then
    \[
H_f(x_{1},x_{3},x_{4},x_{6})=(H^{s}_{x_{1},x_{2}}(D^u f ))^{-1} \, H_f(x_{2},x_{1},x_{6},x_{5})^{-1} \, H_f(x_{2},x_{3},x_{4},x_{5}) \, H^{s}_{x_1,x_2}(D^uf)
    \]
\end{lemma}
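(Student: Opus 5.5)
The identity is purely algebraic. It uses only the groupoid property of the holonomies from Proposition \ref{propn:holonomies}(a), extended to global leaves: $H^{s/u}_{y,z}(D^uf)\,H^{s/u}_{x,y}(D^uf)=H^{s/u}_{x,z}(D^uf)$, and consequently $(H^{s/u}_{x,y}(D^uf))^{-1}=H^{s/u}_{y,x}(D^uf)$. I suppress $D^uf$ from the notation below.

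First I check that every holonomy in the statement is defined, i.e.\ that $(x_1,x_3,x_4,x_6)$ and $(x_2,x_1,x_6,x_5)$ are $us$-quadrilaterals.
\begin{itemize}
\item From the two given quadrilaterals, $x_3\in W^s(x_2)=W^s(x_1)$ and $x_3\in W^u(x_4)$.
\item Likewise $x_6\in W^u(x_1)$ and $x_6\in W^s(x_5)=W^s(x_4)$.
\item Hence $(x_1,x_3,x_4,x_6)$ is a $us$-quadrilateral.
\item For $(x_2,x_1,x_6,x_5)$: $x_1\in W^s(x_2)\cap W^u(x_6)$ and $x_5\in W^u(x_2)\cap W^s(x_6)$.
\end{itemize}

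Next I expand both $H_f$ terms on the right-hand side. Inverting $H_f(x_2,x_1,x_6,x_5)=H^u_{x_5,x_2}H^s_{x_6,x_5}H^u_{x_1,x_6}H^s_{x_2,x_1}$ term by term gives
\[
H_f(x_2,x_1,x_6,x_5)^{-1}=H^s_{x_1,x_2}\,H^u_{x_6,x_1}\,H^s_{x_5,x_6}\,H^u_{x_2,x_5}.
\]
Multiplying this on the right by $H_f(x_2,x_3,x_4,x_5)=H^u_{x_5,x_2}H^s_{x_4,x_5}H^u_{x_3,x_4}H^s_{x_2,x_3}$, the middle factors $H^u_{x_2,x_5}H^u_{x_5,x_2}$ cancel to the identity. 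The adjacent stable factors then combine as $H^s_{x_5,x_6}H^s_{x_4,x_5}=H^s_{x_4,x_6}$. This leaves
\[
H^s_{x_1,x_2}\,H^u_{x_6,x_1}\,H^s_{x_4,x_6}\,H^u_{x_3,x_4}\,H^s_{x_2,x_3}.
\]

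Finally I conjugate by $H^s_{x_1,x_2}$. On the left, $(H^s_{x_1,x_2})^{-1}$ cancels the leading $H^s_{x_1,x_2}$. On the right, $H^s_{x_2,x_3}H^s_{x_1,x_2}=H^s_{x_1,x_3}$. The result is $H^u_{x_6,x_1}H^s_{x_4,x_6}H^u_{x_3,x_4}H^s_{x_1,x_3}$, which is $H_f(x_1,x_3,x_4,x_6)$ by definition.

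There is no real obstacle here. The only care needed is to keep the composition order consistent with the definition of $H_f$ (rightmost factor applied first). One should also note that the groupoid identities are used on the global leaves, where the holonomies exist by the extension in Proposition \ref{propn:holonomies}.
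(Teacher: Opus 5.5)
Your proposal is correct and follows the paper's proof: you expand both quadrilateral holonomies, cancel $H^u_{x_2,x_5}H^u_{x_5,x_2}$, merge the stable factors into $H^s_{x_4,x_6}$ via the groupoid property, and absorb the conjugation by $H^s_{x_1,x_2}$ using $H^s_{x_2,x_3}H^s_{x_1,x_2}=H^s_{x_1,x_3}$. Your explicit check that $(x_1,x_3,x_4,x_6)$ and $(x_2,x_1,x_6,x_5)$ are $us$-quadrilaterals is a small addition that the paper leaves implicit.
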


\begin{proof}
The product of holonomies on the $us$-quadrilaterals $(x_1,x_2,x_5,x_6)$ and $(x_2,x_3,x_4,x_5)$ satisfies the following: 
\begin{equation}\label{lemmaquadholeq}
    \begin{split}  
&H_f(x_2,x_3,x_4,x_5)=H_{x_5,x_2}^{u}(D^{u}f)H_{x_4,x_5}^{s}(D^{u}f)H_{x_3,x_4}^{u}(D^{u}f)H_{x_2,x_3}^{s}(D^{u}f), \quad \text{and} \\
&H_f(x_2,x_1,x_6,x_5)^{-1}=H_{x_1,x_2}^{s}(D^{u}f)H_{x_6,x_1}^{u}(D^{u}f)H_{x_5,x_6}^{s}(D^{u}f)H_{x_2,x_5}^{u}(D^{u}f).
\end{split}
\end{equation}
Taking into consideration the properties of holonomies in Proposition \ref{propn:holonomies}, we take the product of the formulae in Equation \eqref{lemmaquadholeq} to get
\[\begin{split}
H_f(x_{2},x_{1},x_{6},x_{5})^{-1}&H_f(x_{2},x_{3},x_{4},x_{5}) \\&=H_{x_1,x_2}^{s}(D^{u}f)H_{x_6,x_1}^{u}(D^{u}f)H_{x_5,x_6}^{s}(D^{u}f)H_{x_4,x_5}^{s}(D^{u}f)H_{x_3,x_4}^{u}(D^{u}f)H_{x_2,x_3}^{s}(D^{u}f)\\
&=H^{s}_{x_1,x_2}(D^{u}f)H^{u}_{x_{6},x_1}H_{x_4,x_6}^{s}(D^{u}f)H^{u}_{x_3,x_4}(D^{u}f)H^{s}_{x_1,x_3}(D^{u}f)H^{s}_{x_{1},x_{2}}(D^{u}f)^{-1}\\
&=H^{s}_{x_{1},x_{2}}(D^{u}f)H_{f}(x_1,x_3,x_4,x_6)H^{s}_{x_1,x_2}(D^{u}f)^{-1},
\end{split}
\]
which is equivalent to what we want to prove.
\end{proof}

Fixing arbitrary $x \in V$, we can apply Lemma \ref{lem:quadhol} to get
\[
\Phi_{a,b}^f(\text{Hol}^s_{p,z}(x))=H_{a,c}^{s}(D^{u}f)^{-1}\,(\Phi_{c,a}^f(x))^{-1}\, \Phi^f_{c,b}(x) \,H^{s}_{a,c}(D^{u}f).
\]
Since $D_x\Phi^f_{a,b}$ is non-zero for every $x \in V$, the product rule yields that for every $x \in V$, either $D_x(\Phi^f_{c,a})^{-1}$ or $D_x\Phi^f_{c,b}$ is non-zero. In any case, since $p \in V$, we may assume without loss of generality that $D_p\Phi_{c,a}$ is non-zero. Notice that the unstable manifold $W^{u,f}(p)$ maps into itself since $p$ is a fixed point of $f$. Therefore, by Theorem \ref{thm:matchingtheorem}, we have the following matching relations for every $x \in W^{u}(p)$:
\[ 
\begin{split}
    \Phi^{f}_{c,a}(x)=(C_{c})^{-1}\Phi^g_{h(c),h(a)}(h(x))C_{c} \quad \text{and} \quad \Phi^{f}_{c,a}(f|_{W^{u}(p)}(x))=(C_{c})^{-1}\Phi^g_{h(c),h(a)}(f|_{W^{u}(p)}(h(x)))C_{c}.
\end{split}
\]
Using a coordinate system for $E^{u,f}_{a}$, we can view the maps $\Phi^{f}_{c,a}$ and $\Phi^{g}_{h(c),h(a)}$ as $GL(2,\R)$-valued functions, and, using the fact that $D_{p}\Phi^{f}_{c,a} \neq 0$, we get a pair of $C^{1}$-functions $\varphi^{f}_{c,a}: W^{u,f}(a) \to \mathbb{R}$ and $\varphi^{g}_{c,a}: W^{u,g}(h(p)) \to \mathbb{R}$ such that $D_{p}\varphi^{f}_{p}\neq 0 \neq D_{h(p)}\varphi^{g}_{h(p)}$ and which satisfy the matching relations for every $x \in W^u(p)$:
\[ 
    \varphi^{f}_{c,a}(x)=\varphi_{h(c),h(a)}(h(x)) \quad\text{and}\quad    \varphi^{f}_{c,a}(f|_{W^{u}(p)}(x))=\varphi_{h(c),h(a)}(f|_{W^{u}(p)}(h(x))),
\]
Since the differentials of the functions $\varphi^{f}_{c,a}$ and $\varphi^{g}_{h(c),h(a)}$ do not vanish at $p$ and $h(p)$, respectively, we get that both maps 
\[ \Psi^f=(\varphi^{f}_{c,a}, \varphi^{f}_{c,a} \circ f|_{W^{u}(p)}) \quad \text{and}\quad \Psi^{g}=(\varphi^{g}_{c,a}, \varphi^{g}_{c,a} \circ g|_{W^{u}(h(p))})\] 
 are $C^{1}$ local diffeomorphisms around $p$ and $h(p)$ which satisfy $\Psi^{f}=\Psi^{g} \circ h$. Note here that we are using the fact that $L$ has non-real complex conjugate eigenvalues in order to ensure that $D_p\Psi^f$ and $D_{h(p)}\Psi^g$ are invertible. Therefore $h$ is a $C^{1}$ diffeomorphism in a neighborhood $\mathcal{V}_p$ of $p$. One can then carry the smoothness to the entire manifold in the same way as in \cite[Section 3.5]{GRH}. Let $q \in W^{s,f}(p)$ and let $\mathcal{V}_{q}$ be the image under the holonomy map $\text{Hol}^s_{p,q}$ of $\mathcal{V}_p$. We notice that $h|_{\mathcal{U}_{q}}= \text{Hol}^s_{p,q} \circ h|_{\mathcal{V}_p} \circ (\text{Hol}^s_{p,q})^{-1}$ and therefore $h$ is $C^{1}$ in $\mathcal{V}_{q}$ for every $q \in W^{s}(p)$. These last fact combined with the fact that the stable foliation is minimal imply that $h$ is $C^{1}$ along $W^{u,f}$ on the whole $\T^{3}$. By a bootstrap argument \cite[Proposition 3.3]{GRH}, we have that $h$ is $C^{r}$ along $W^{u,f}$. Since $W^{s,f}$ is one-dimensional, $h$ is smooth along $W^{s,f}$ by the same argument in \cite{de1987invariants}. Using Journ\'e's lemma \cite{journelemma}, $h$ is $C^{r_*}$, where 
 \[ r_* = \begin{cases}r &\text{if } r \notin \mathbb{Z} \\ (r-1) + \text{Lip} &\text{if } r \in \mathbb{Z}. \end{cases}\]
 In particular, $h$ is smooth when $f$ and $g$ are smooth.  $\qed$

\section{Parry and Homoclinic Holonomy Groups} \label{sec:parry}

The goal of this section is to prove the matching theorem (Theorem \ref{thm:matchingtheorem}) as well as discuss the standing assumption from the previous section. To start, note that since $f$ and $g$ are sufficiently close to $L$ and $E^{u,L}$ is a trivial bundle, we may assume that $E^{u,f}$ and $E^{u,g}$ are both trivial bundles. As a consequence, we are able to view the unstable derivative cocycles as $\text{GL}(2,\mathbb{R})$ cocycles over $f$, and we work in this level of generality.

\subsection{Parry monoid and representation}
Let $f : \mathbb{T}^3 \rightarrow \mathbb{T}^3$ be an Anosov diffeomorphism with a fixed point $p$, and let $A : \mathbb{T}^3 \times \mathbb{R}^2 \rightarrow \mathbb{T}^3$ be a H\"older continuous cocycle over $f$ which is fiber bunched. In particular, we can view $A$ as a H\"older continuous map ${A : \mathbb{T}^3 \rightarrow \text{GL}(2,\mathbb{R})}$, and for $x \in \mathbb{T}^3$ and $n \in \mathbb{Z}$ we write 
\[ A^n_x \coloneq \begin{dcases} A(f^{n-1}(x)) \cdots A(x) &\text{if } n \geq 1, \\ \text{Id} &\text{if } n = 0, \\ (A^{-n}_{f^n(x)})^{-1} &\text{if } n \leq -1. \end{dcases} \] 
By abuse of notation, we also refer to the map $A : \mathbb{T}^3 \rightarrow \text{GL}(2,\mathbb{R})$ as a H\"older cocycle over $f$. We say that $A$ is an \emph{almost coboundary} if there exists a H\"older continuous map $C : \mathbb{T}^3 \rightarrow \text{GL}(2,\mathbb{R})$ and a matrix $L \in \text{GL}(2,\mathbb{R})$ such that $A(x) = C(f(x)) \, L \, C(x)^{-1}.$

We say that a $us$-loop $\gamma$ is \emph{homoclinic} if it can be written as a concatenation $\gamma = \gamma_2 * \gamma_1$, where $\gamma_1(1) = \gamma_2(0) \in W^{u}(\gamma(0)) \cap W^{s}(\gamma(0))$, $\gamma_1$ is a $u$-path, and $\gamma_2$ is an $s$-path. In particular, if a $us$-loop is homoclinic, then it connects $x$ to a point $z \coloneq \gamma(1)$ homoclinic to $x$. For the purposes of studying algebraic objects associated to these curves, we identify these curves under the equivalence relation given by reparameterization. This will make concatenation an associative operation, and we can study the related monoid of paths.

For $x \in \mathbb{T}^3$, let $\mathcal{H}_{x}$ be the monoid generated by the collection of homoclinic $us$-loops under concatenation. We refer to $\mathcal{H}_{x}$ as the \emph{Parry monoid} based at $x$.\footnote{This is inspired by \emph{Parry's monoid} in \cite{cekic2025holonomy}, which in turn was inspired by \cite{parry1999livvsic}.} Using the periodic cycle holonomy, we get the \emph{Parry representation} of $A$: 
\[ \rho^A_{x} : \mathcal{H}_{x} \rightarrow \text{SL}(2,\mathbb{R}), \quad \rho^A_{x}(\gamma) := PCH_\gamma(A).\]
We say that $\rho^A_{p}$ is \emph{irreducible} if there are no proper subspaces $V \subseteq \mathbb{R}^2$ satisfying $\rho^A_{p}(\gamma)(V) \subseteq V$ for all $\gamma \in \mathcal{H}_{p}$.

Observe that the Parry monoid generates a group $\mathcal{G}_{x}$ which we call the \emph{Parry group}, and the Parry representation naturally extends to the Parry group via
\[ \rho^A_{p}(\gamma^{-1}) = (\rho^A_{p}(\gamma))^{-1}.\]
The \emph{homoclinic holonomy group} based at $x$ is the image of the Parry group under the Parry representation, and we denote this by $G_x \coloneq \rho^A_x(\mathcal{G}_x)$. We say that the homoclinic holonomy group is \emph{irreducible} if the Parry representation is irreducible. Before moving on, we rephrase a result by Sadovskaya in terms of the homoclinic holonomy group.

\begin{lemma}[{\cite[Proposition 4.7]{sadovskaya2015cohomology}}] \label{lem:extension}
        Let $A,B : \mathbb{T}^3 \rightarrow \text{GL}(2,\mathbb{R})$ be H\"older continuous fiber bunched cocycles over $f$ and let $p$ be a fixed point of $f$. If $C_{p} \in \text{GL}(2,\mathbb{R})$ is such that 
        \begin{enumerate}[(a)]
            \item $A_{p} = C_{p} B_{p} C_{p}^{-1}$, and
            \item $\rho^A_{p}(\gamma) = C_{p} \, \rho^B_{p}(\gamma) \, C_{p}^{-1}$ for every $\gamma \in \mathcal{H}_{p}$,
        \end{enumerate}
        then there exists a unique H\"older continuous conjugacy $C : \mathbb{T}^3 \rightarrow \text{GL}(2,\mathbb{R})$ between $A$ and $B$ such that $C(p) = C_{p}$. Moreover, if $A$ and $B$ take values in a closed subgroup of $\text{GL}(2,\mathbb{R})$, then so does $C$.
\end{lemma}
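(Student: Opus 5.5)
The plan is to build $C$ by transporting $C_p$ along $us$-paths with holonomies, and to use hypotheses (a) and (b) to show the result does not depend on the path. Since $f$ is transitive Anosov, $W^s(p)\cap W^u(p)$ is dense. For a point $x\in W^u(p)$ I would set
\[
C(x) \coloneq H^u_{p,x}(A)\, C_p\, H^u_{p,x}(B)^{-1},
\]
and analogously along $W^s(p)$. On a general point $x$ I would try to use a $us$-path $\gamma$ from $p$ to $x$, setting $C(x) = PCH_\gamma(A)\,C_p\,PCH_\gamma(B)^{-1}$ with the composition order of Proposition \ref{propn:pcf}. This formula is forced: any continuous conjugacy between fiber bunched cocycles intertwines their holonomies. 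That is the standard fact, proved via the uniqueness in Proposition \ref{propn:holonomies}, since $C^{-1}H(A)C$ satisfies the defining properties of $H(B)$. The same fact gives uniqueness of $C$ once $C(p)=C_p$ is fixed, because $us$-paths from $p$ reach every point.

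Rather than proving path independence for arbitrary $us$-loops, which is the hard step, I would follow Sadovskaya's route, of which the lemma is a rephrasing. First define $C$ only on the dense set of points homoclinic to $p$. For $z\in W^u(p)\cap W^s(p)$ there are two natural definitions: through the $u$-holonomy and through the $s$-holonomy. Their ratio is exactly $C_p^{-1}\rho^A_p(\gamma)C_p\,\rho^B_p(\gamma)^{-1}$ for the homoclinic loop $\gamma$ from $p$ to $z$ and back, so hypothesis (b) says precisely that the two definitions agree.

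Next I would check the conjugacy equation on this set. Hypothesis (a), together with the equivariance in Proposition \ref{propn:pcf}(b) and $f(p)=p$, gives
\[
C(f(z)) = A_z\, C(z)\, B_z^{-1}.
\]
The main obstacle is extending $C$ continuously, indeed H\"older, from the homoclinic points to all of $\mathbb{T}^3$. My approach would be to show uniform H\"older continuity on the homoclinic set. Given two nearby homoclinic points $z,z'$, I would connect them via local product structure by a short $u$-path and $s$-path through a point $w$. Compatibility of the two definitions (via $u$ from one and $s$ from the other) again reduces to hypothesis (b), applied to suitable homoclinic loops combined with $f$-iterates. Then Proposition \ref{propn:holonomies}(c) yields $\|C(z)-C(z')\|\lesssim d(z,z')^\eta$, using the bounds $\|H-\mathrm{Id}\|\le c\,d^\eta$. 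Boundedness of $C$ on the homoclinic set needs care. I would get it by using the equivariance to pull points back into a fixed small neighborhood of $p$, where the local holonomies are uniformly close to the identity, and then controlling the growth with fiber bunching.

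Having uniform continuity on a dense set, $C$ extends to a H\"older map on $\mathbb{T}^3$, and the conjugacy equation passes to the limit. Finally, if $A$ and $B$ take values in a closed subgroup $G$, then their holonomies are limits of products of $A$ and $B$ values, so they lie in $G$. In that case $C$ takes values in $G$ on the homoclinic set, and hence everywhere by closedness. This assumes $C_p\in G$, which is implicitly needed and would have to be added as a hypothesis for this last part.
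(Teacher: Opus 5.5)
The paper gives no proof of this lemma; it is quoted from Sadovskaya's Proposition 4.7. Your outline is the standard way such extension results are proved: define $C$ on the dense set $Z=W^u(p)\cap W^s(p)$ through $u$- or $s$-holonomies, use (b) to see that the two definitions agree, use (a) and equivariance for the cocycle equation, use local product structure for a H\"older estimate, and extend by density. Your remark that the ``moreover'' clause needs $C_p\in G$ is correct. For $A=B\equiv \mathrm{Id}$, every $C_p$ satisfies (a) and (b), and the resulting conjugacy is the constant $C_p$.

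There is, however, a real gap at the step you flag yourself, boundedness of $C$ on $Z$, because the mechanism you propose does not work. Pulling back gives $C(z)=A^n_{f^{-n}(z)}\,C(f^{-n}(z))\,(B^n_{f^{-n}(z)})^{-1}$, which is only the definition of $C$ along $W^u(p)$ rewritten. The integer $n$ must grow with the leaf distance from $p$ to $z$, and the orbit segment from $f^{-n}(z)$ to $z$ leaves the neighbourhood of $p$. Fiber bunching only bounds $\|A^n_x\|\,\|(A^n_x)^{-1}\|$ by a quantity that may grow exponentially in $n$, so nothing uniform comes out. Such an argument also never uses (b), which is exactly where boundedness comes from. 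The holonomies $H^u_{p,z}(A)$ and $H^u_{p,z}(B)$ are in general unbounded along $W^u(p)$; only the combination $H^u_{p,z}(A)\,C_p\,H^u_{p,z}(B)^{-1}$ is controlled, through its compatibility with stable holonomies.

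The fix uses what you already have, made precise. For nearby $z,z'\in Z$, the corner point $w=W^u_{loc}(z)\cap W^s_{loc}(z')$ lies in $W^u(p)\cap W^s(p)$, so $w\in Z$. Hypothesis (b) at $w$ then gives directly, with no iterates,
\[
C(z')=H^s_{w,z'}(A)\,H^u_{z,w}(A)\,C(z)\,H^u_{z,w}(B)^{-1}\,H^s_{w,z'}(B)^{-1}.
\]
Hence $\|C(z')\|\le K\|C(z)\|$ and $\|C(z')-C(z)\|\le K'\,d(z,z')^\eta\,\|C(z)\|$. Now fix finitely many $z_1,\dots,z_N\in Z$ forming an $\varepsilon$-net of $\mathbb{T}^3$. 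Every $z\in Z$ is $\varepsilon$-close to some $z_i$, so $\sup_Z\|C\|\le K\max_i\|C(z_i)\|$, and only then is the multiplicative estimate a genuine H\"older bound.

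You need the same bound for $C^{-1}$, which satisfies the analogous formulas with $A$, $B$, $C_p$ replaced by $B$, $A$, $C_p^{-1}$. Otherwise the continuous extension is only a limit of invertible matrices and need not take values in $\mathrm{GL}(2,\mathbb{R})$.
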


Next, we explain why we are able to reduce our problem to studying $\text{SL}^{\pm}(2,\mathbb{R})$-cocycles when the cocycle has \emph{cohomologically trivial determinant}, meaning that there is a H\"older continuous function $u : \mathbb{T}^3 \rightarrow \mathbb{R}$ such that 
\[ \det(A(x)) = e^{u(f(x)) - u(x)}.\]

\begin{lemma} \label{lem:normalization}
     Let $f : \mathbb{T}^3 \rightarrow \mathbb{T}^3$ be an Anosov diffeomorphism with a fixed point $p$ and consider $A : \mathbb{T}^3 \rightarrow \text{GL}(2,\mathbb{R})$ a H\"older linear cocycle over $f$ which is fiber bunched, has cohomologically trivial determinant, and satisfies the condition that the periodic cycle holonomy of $A$ vanishes on $us$-loops. Let 
    \[ B : \mathbb{T}^3 \rightarrow \text{SL}^{\pm}(2,\mathbb{R}), \quad B(x) \coloneq |\det(A(x))|^{-1/2} A(x).\]
    Then $B$ is fiber bunched and the periodic cycle holonomy of $B$ vanishes on every $us$-loop. Furthermore, $B$ is an almost coboundary if and only if $A$ is an almost coboundary.
\end{lemma}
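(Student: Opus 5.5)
The plan is to relate the holonomies of $B$ to those of $A$ through the scalar cocycle $a(x) \coloneq |\det(A(x))|^{-1/2}$. By cohomological triviality, $a(x) = e^{-(u(f(x))-u(x))/2}$; set $w \coloneq e^{-u/2}$, a H\"older positive function, so that $a(x) = w(f(x))/w(x)$ and hence $a^n_x = w(f^n(x))/w(x)$ for all $n\in\mathbb{Z}$. Then $B^n_x = \frac{w(f^n(x))}{w(x)} A^n_x$.

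\textbf{Fiber bunching.} The quantity $\|B^n_x\|\,\|(B^n_x)^{-1}\|$ is invariant under multiplying by a nonzero scalar, so it equals $\|A^n_x\|\,\|(A^n_x)^{-1}\|$, and the same holds for negative iterates. Thus the fiber bunching inequalities for $B$ are literally those for $A$, with the same $\theta$ and $L$. One also checks that $B$ is H\"older: $|\det A|$ is H\"older and bounded away from $0$, so $|\det A|^{-1/2}$ is H\"older with the same exponent.

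\textbf{Holonomies.} I claim that for $y\in W^{s}(x)$ one has $H^s_{x,y}(B) = \frac{w(y)}{w(x)} H^s_{x,y}(A)$, and similarly for unstable holonomies. I would verify this using the limit formula of Proposition \ref{propn:holonomies}:
\[ (B^n_y)^{-1}B^n_x = \frac{w(y)}{w(f^n(y))}\cdot\frac{w(f^n(x))}{w(x)}\,(A^n_y)^{-1}A^n_x .\]
Since $d(f^n x,f^n y)\to0$ and $w$ is uniformly continuous and positive, the ratio $w(f^n x)/w(f^n y)$ tends to $1$, which gives the claim. The unstable case is identical with $f^{-n}$. Alternatively, one can check that the right-hand side satisfies properties (a)--(c) and then invoke uniqueness. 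Consequently, for a $us$-path $\gamma$ from $x$ to $y$ the scalar factors telescope, giving $PCH_\gamma(B) = \frac{w(y)}{w(x)}PCH_\gamma(A)$. For a $us$-loop we have $y=x$, so $PCH_\gamma(B)=PCH_\gamma(A)$, which is the identity by hypothesis. (The hypothesis ``vanishes'' is read as ``is trivial''.)

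\textbf{Almost coboundary equivalence.} Suppose $A(x) = C(f(x))\,L\,C(x)^{-1}$. Then
\[ B(x) = \frac{w(f(x))}{w(x)}\, C(f(x)) L C(x)^{-1} = \big(w(f(x))C(f(x))\big)\,L\,\big(w(x)C(x)\big)^{-1},\]
so $B$ is an almost coboundary via the H\"older map $wC$. For the converse, take $C'=w^{-1}C$.

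The main subtle point is the holonomy-rescaling identity. It rests on the existence of the H\"older transfer function $w$, which is exactly what cohomologically trivial determinant provides. Without it, the scalar factors $a^n$ would not converge along stable and unstable leaves in a way that telescopes around loops.
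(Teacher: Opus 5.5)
Your proof is correct and follows the same route as the paper. Both arguments write $|\det A|^{-1/2}=w\circ f/w$ with $w=e^{-u/2}$, note that $\|B^n_x\|\,\|(B^n_x)^{-1}\|=\|A^n_x\|\,\|(A^n_x)^{-1}\|$, and rescale the conjugacy by $w^{\pm1}$. The one difference is that you make explicit the identity $H^{s/u}_{x,y}(B)=\frac{w(y)}{w(x)}H^{s/u}_{x,y}(A)$ via the limit formulas, and its telescoping around loops justifies the step the paper covers with ``it follows that.''
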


\begin{proof}
    Let $s(x) \coloneq |\det(A(x))|^{-1/2}$. Because $A$ has cohomologically trivial determinant, we can write
    \[ s(x) = e^{-(u(f(x)) - u(x))/2},\]
    where $u : \mathbb{T}^3 \rightarrow \mathbb{R}$ is H\"older. In light of this, 
    \[ B^n_x = e^{-(u(f^n(x)) - u(x))/2} A^n_x,\]
    and it follows that $B$ is fiber bunched and satisfies the condition that the periodic cycle holonomy of $B$ vanishes on $us$-loops. 

    To finish, let $\varphi(x) \coloneq e^{-u(x)/2}$. If $A$ is an almost coboundary, so $A(x) = C(f(x)) \, L \, C(x)^{-1}$ for some $L \in \text{GL}(2,\mathbb{R})$, then let $\hat{C}(x) \coloneq \varphi(x) C(x)$. We see that 
    \[ B(x) = s(x)A(x) = \varphi(f(x)) C(f(x)) \,  L \,  (\varphi(x)C(x))^{-1} = \hat{C}(f(x)) \, L \, \hat{C}(x).\]
    Thus, $B$ is an almost coboundary. On the other hand, if $B$ is an almost coboundary, then we can write $B(x) = C(f(x)) \, L \, C(x)^{-1}$. Now let $\hat{C}(x) \coloneq \varphi(x)^{-1} C(x)$. We see that 
    \[ A(x) = s(x)^{-1} B(x) = \hat{C}(f(x)) \, L \, \hat{C}(x)^{-1}. \qedhere\]
\end{proof}

From now on, we work with $\text{SL}^\pm(2,\mathbb{R})$-cocycles. We now want to classify when the homoclinic holonomy group based at our fixed point $p$ is trivial. Using Lemma \ref{lem:extension}, we deduce the following.

\begin{proposition}\label{propn:KK}
    Let $A : \mathbb{T}^3 \rightarrow \text{SL}^\pm(2,\mathbb{R})$ be a H\"older continuous fiber bunched cocycle over $f$ such that $A_{p}$ is elliptic. We have that $A$ is an almost coboundary if and only the homoclinic holonomy group based at $p$ is trivial.
\end{proposition}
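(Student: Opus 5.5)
Both directions are proved by comparing $A$ with the constant cocycle $B \equiv A_p$ through Lemma \ref{lem:extension}, after noting how the Parry representation behaves under conjugacy.

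\emph{($\Leftarrow$)} Suppose $G_p$ is trivial, so $\rho^A_p(\gamma)=\text{Id}$ for every $\gamma\in\mathcal{H}_p$. Let $B(x)\coloneq A_p$ be the constant cocycle over $f$. It is trivially H\"older, and it is fiber bunched because $A_p$ is elliptic: an elliptic element of $\text{SL}^\pm(2,\mathbb{R})$ is conjugate to a rotation (possibly composed with a reflection), so $\|B^n_x\|\,\|(B^n_x)^{-1}\|$ is bounded uniformly in $n$. The holonomies of a constant cocycle are the identity, since the limits in Proposition \ref{propn:holonomies} are $A_p^{-n}A_p^n=\text{Id}$. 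Hence $\rho^B_p\equiv\text{Id}=\rho^A_p$.

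We then apply Lemma \ref{lem:extension} with $C_p=\text{Id}$. Condition (a) reads $A_p=B_p$ and condition (b) reads $\rho^A_p=\rho^B_p$, and both hold. The lemma gives a H\"older $C:\mathbb{T}^3\to\text{GL}(2,\mathbb{R})$ with $A(x)=C(f(x))\,A_p\,C(x)^{-1}$. This is exactly the statement that $A$ is an almost coboundary with $L=A_p$.

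\emph{($\Rightarrow$)} Suppose $A(x)=C(f(x))\,L\,C(x)^{-1}$. First we check that holonomies transform by conjugation: $H^{s/u}_{x,y}(A)=C(y)\,H^{s/u}_{x,y}(L)\,C(x)^{-1}$.

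\begin{itemize}
\item This follows from uniqueness in Proposition \ref{propn:holonomies}. The right-hand side satisfies the composition rule (a) and the equivariance (b) directly.
\item For the H\"older bound (c), we first need the constant cocycle $L$ to be fiber bunched, so that $H(L)=\text{Id}$ makes sense. Since $A^n_x=C(f^n x)L^nC(x)^{-1}$ with $C$ bounded, $\|L^n\|\|L^{-n}\|$ is comparable to $\|A^n_x\|\|(A^n_x)^{-1}\|$, so fiber bunching of $A$ passes to $L$. Its holonomies are then the identity.
\item The bound $\|C(y)C(x)^{-1}-\text{Id}\|\le c\,d(x,y)^{\eta'}$ follows from the H\"older continuity of $C$, possibly with a smaller exponent. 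Uniqueness of holonomies still holds with that exponent, which is the technical point to be careful about.
\end{itemize}

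Given the transformation rule, along any $us$-loop based at $p$ the factors telescope, so $\rho^A_p(\gamma)=C(p)\,\text{Id}\,C(p)^{-1}=\text{Id}$. Thus $G_p$ is trivial.

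Ellipticity of $A_p$ is used only in the first direction, to make the constant cocycle fiber bunched. The step needing the most care is the holonomy transformation rule in the second direction: the $C^0$-H\"older comparison and the exponent bookkeeping needed to invoke uniqueness.
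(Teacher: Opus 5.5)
Your ($\Leftarrow$) direction is the paper's argument: compare with the constant cocycle $B\equiv A_p$ and apply Lemma~\ref{lem:extension} with $C_p=\mathrm{Id}$. You are more careful than the paper in checking that $B$ is fiber bunched. For ($\Rightarrow$) your route differs. The paper never uses uniqueness of holonomies; it substitutes $A^n_x=C(f^n(x))L^nC(x)^{-1}$ directly into the approximants $R_n$ of $\rho^A_p(\gamma)$ for homoclinic loops. It then concludes from $C(p)^{-1}C(f^{\pm n}(z_j))\to\mathrm{Id}$ together with boundedness of $L^{\pm n}$. You instead identify the holonomies themselves, $H^{s/u}_{x,y}(A)=C(y)C(x)^{-1}$. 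This is more structural, and it shows the periodic cycle holonomy is trivial on every $us$-loop, not only on homoclinic ones.

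The step you flag, however, is not automatic, and your closing sentence is inaccurate: this is exactly where ellipticity is needed in ($\Rightarrow$). Uniqueness in Proposition~\ref{propn:holonomies} is among families satisfying (c) with the fiber-bunching exponent $\eta$. If $C$ is only $\beta$-H\"older with $\beta<\eta$, comparing your family with the canonical one via (b) gives
\[ \|C(y)C(x)^{-1}-H^s_{x,y}(A)\|\le \|(A^n_y)^{-1}\|\,\|A^n_x\|\cdot c\,d(f^n(x),f^n(y))^{\beta}. \]
Fiber bunching at exponent $\eta$ does not make the right-hand side tend to $0$.

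What closes the gap is that $L=C(p)^{-1}A_pC(p)$ (using $f(p)=p$) is elliptic. Hence $\|(A^n_y)^{-1}\|\,\|A^n_x\|\le K^4\|L^{-n}\|\,\|L^n\|$ is bounded in $n$, where $K$ bounds $\|C^{\pm1}\|$, and uniqueness then holds at every H\"older exponent.

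Alternatively, bypass uniqueness. In $H^s_{x,y}(A)=\lim_n C(y)L^{-n}C(f^n(y))^{-1}C(f^n(x))L^nC(x)^{-1}$ the middle factor tends to $\mathrm{Id}$ and $L^{\pm n}$ is bounded. So the limit is $C(y)C(x)^{-1}$ using only continuity of $C$, and similarly for $H^u$ with $f^{-n}$. This is the paper's mechanism.

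With either fix your proof is complete, and the detour through fiber bunching of $L$ becomes unnecessary.
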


\begin{proof}
    Suppose that the homoclinic holonomy group based at $p$ is trivial and consider the constant cocycle $x \mapsto L_x \coloneq A_p$. Note that Lemma \ref{lem:extension} (a) is automatically satisfied with $C_p = \text{Id}$. It follows from the definition that $\rho^L_p(\gamma) = \text{Id}$ for all $\gamma \in \mathcal{H}_p$, and hence Lemma \ref{lem:extension} (b) is also satisfied. Thus, $A$ is an almost coboundary.

    On the other hand, if $A$ is an almost coboundary with $A_p$ elliptic, then $A_x = C(f(x)) L C(x)^{-1}$ for some H\"older continuous map $C : \mathbb{T}^3 \rightarrow \text{GL}(2,\mathbb{R})$. Furthermore, notice
    \[ A_p= C(p) L C(p)^{-1},\]
    so $L$ is elliptic. We also recall the relationship
    \[ A^n_x = C(f^n(x)) L C(x)^{-1}.\] 
    Now, let $\gamma = \gamma_m * \cdots * \gamma_1 \in \mathcal{H}_p$, where each $\gamma_j$ is a homoclinic $us$-loop corresponding to a point $z_j$ homoclinic to $p$. Define
    \[ R_n^j \coloneq (A_{p}^n)^{-1} A_{f^{-n}(z_j)}^{2n}(A_{p}^n)^{-1} \quad \text{and}\quad R_n \coloneq R_n^1 \, \cdots \, R_n^m,\]
    and observe that by definition
    \[ \rho^A_p(\gamma) = \lim_{n \rightarrow \infty} R_n.\]
    We can rewrite $R_n$ as 
    \[ R_n = \prod_{j=1}^mC(p) L^{-n} C(p)^{-1} C(f^n(z_j)) L^{2n} C(f^{-n}(z_j))^{-1} C(p) L^{-n} C(p)^{-1}. \]
    Using the fact that $C$ is H\"older and $z_j$ is homoclinic to $p$, we have 
    \[ C(p)^{-1} C(f^n(z_j)) \xrightarrow[]{n \rightarrow \infty} \text{Id} \quad\text{and}\quad C(f^{-n}(z_j))^{-1}C(p) \xrightarrow[]{n \rightarrow \infty} \text{Id},\]
    so leveraging the fact that $L$ is elliptic, we have $\lim_{n \rightarrow \infty} R_n = \text{Id}$.
\end{proof}

Finally, we give a useful criteria for when the homoclinic holonomy group is irreducible. 

\begin{proposition} \label{propn:irreduciblegp}
     Let $A : \mathbb{T}^3 \rightarrow \text{SL}^\pm(2,\mathbb{R})$ be a H\"older continuous fiber bunched cocycle over $f$ such that $A_{p}$ is elliptic. If $0 < |\text{Tr}(A_{p})| < 2$, then $G_{p}$ is either trivial or irreducible.
\end{proposition}

\begin{proof}
    If $G_{p}$ contains a non-trivial rotation matrix, then it must be irreducible. Thus, we may assume that $G_{p}$ does not contain a non-trivial rotation matrix and is non-trivial.
    Using Proposition \ref{propn:pcf}, it follows that for every $x \in \mathbb{T}^3$ with $f^n(x) = x$ we have 
    \[ (A_x^{n})^{-1} G_x  A_x^n = G_x.\]
    By our assumption, this implies that $G_{p}$ is invariant under conjugation by a rotation matrix which does not perserves the axes. Consequently, if $\text{Tr}(A_{p}) \neq 0$, then its Zariski closure must be the whole group, so $G_{p}$ irreducible. 
\end{proof}

\subsection{Matching trace alternative}
Given two cocycles $A$ and $B$ over $f$, we say they have \emph{matching periodic trace} if 
$ \text{Tr}(A^k_{x}) = \text{Tr}(B^k_{x}) \text{ whenever } f^k(x) = x.$ 
The main theorem of this section gives a sufficient condition for the cocycles to be conjugate in terms of the Parry representations and the matching periodic trace condition.

\begin{proposition}
\label{propn:exact-livsic}
    Let $A, B : \mathbb{T}^3 \rightarrow \text{SL}^\pm(2,\mathbb{R})$ be two H\"older continuous fiber bunched maps which have matching periodic trace. If $A_{p} = B_{p}$, $0 < |\text{Tr}(A_{p})| < 2$, and the Parry representations of $A$ and $B$ at $p$ are irreducible,
    then $A$ and $B$ are conjugate.
\end{proposition}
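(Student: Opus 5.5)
The plan is to reduce to Lemma \ref{lem:extension}. Condition (a) holds with $C_p = \text{Id}$ since $A_p = B_p$. So everything hinges on finding $C_p \in \text{GL}(2,\mathbb{R})$ commuting with $A_p$ such that $\rho^A_p(\gamma) = C_p\,\rho^B_p(\gamma)\,C_p^{-1}$ for all $\gamma \in \mathcal{H}_p$. Once we have it, Lemma \ref{lem:extension} applies with this $C_p$ (condition (a) survives because $C_p$ commutes with $A_p$) and gives a H\"older conjugacy.

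First I will translate matching periodic trace into matching traces of the Parry representations. Take $\gamma = \gamma_m * \cdots * \gamma_1 \in \mathcal{H}_p$ with $\gamma_j$ associated to homoclinic points $z_j$. As in the proof of Proposition \ref{propn:KK}, $\rho^A_p(\gamma) = \lim_n R_n$ with $R_n^j = (A^n_p)^{-1}A^{2n}_{f^{-n}(z_j)}(A^n_p)^{-1}$. The orbit segments $f^{-n}(z_j),\dots,f^{n}(z_j)$ start and end near $p$. By Proposition \ref{propn:anosov-closing}, applied with $m$ segments of length $2n$ (after relabelling the base points so that all lie within $\varepsilon$ of $p$), we get a periodic point $q_n$ of period $2mn$ shadowing the concatenation exponentially well in the middle. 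H\"older continuity then gives
\[
A^{2mn}_{q_n} = \prod_j A^{2n}_{f^{-n}(z_j)} + o(1)\cdot\|\text{stuff}\|,
\]
with errors controlled by fiber bunching. One then interleaves the returns near $p$: the pieces $A_p^{n}$ at the ends of consecutive segments combine to $A_p^{2n}$. Since $A_p$ is elliptic, its powers are bounded and we can pass to subsequences with $A_p^{2n} \to \text{Id}$ (or to any limit point). This yields
\[
\text{Tr}\big(A_p^{k}\rho^A_p(\gamma_m)\cdots A_p^{k}\rho^A_p(\gamma_1)\big) = \text{Tr}\big(B_p^{k}\rho^B_p(\gamma_m)\cdots B_p^{k}\rho^B_p(\gamma_1)\big)
\]
for limit points, and in particular $\text{Tr}\,\rho^A_p(\gamma) = \text{Tr}\,\rho^B_p(\gamma)$ for all $\gamma \in \mathcal{H}_p$. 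Inserting the extra powers of $A_p = B_p$ also gives matching traces on the monoid generated by $\rho(\mathcal{H}_p)$ together with $A_p$. The uniformity of these error estimates, and the fact that the elliptic, not hyperbolic, behaviour of $A_p$ keeps the products bounded, is where I expect the main technical work. Fiber bunching should keep the derivative of the cocycle along the shadowing orbit comparable to the one along the homoclinic orbit.

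Next I will use the classical fact that two irreducible representations of a monoid (equivalently, of the group it generates) into $\text{SL}^\pm(2,\mathbb{R})\subset \text{GL}(2,\mathbb{C})$ with equal characters are conjugate. For $2\times 2$, absolute irreducibility follows from irreducibility plus the presence of non-commuting elements. The trace identity $\text{Tr}(XY)+\text{Tr}(XY^{-1})=\text{Tr}X\,\text{Tr}Y$ for determinant $\pm1$, together with the Burnside/Procesi theorem, gives $C_p \in \text{GL}(2,\mathbb{C})$ with $\rho^A_p = C_p \rho^B_p C_p^{-1}$. Realness of $C_p$ follows by averaging $C_p$ with $\overline{C_p}$: by Schur's lemma $\overline{C_p} = \lambda C_p$, which allows rescaling to a real matrix.

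Finally, $C_p$ must be made to commute with $A_p$. Apply the character argument to the representation of the monoid generated by $\rho(\mathcal{H}_p)$ and $A_p$, using the enlarged trace identity from the first step. This produces $C_p$ with $\rho^A_p = C_p\rho^B_p C_p^{-1}$ and $A_p = C_p B_p C_p^{-1} = C_p A_p C_p^{-1}$. Uniqueness of $C_p$ up to scalars (Schur's lemma, using irreducibility) makes these consistent. Lemma \ref{lem:extension} then yields the H\"older conjugacy between $A$ and $B$, taking values in $\text{SL}^\pm(2,\mathbb{R})$ after normalising the determinant. The hypothesis $0<|\text{Tr}(A_p)|<2$ is only used, via Proposition \ref{propn:irreduciblegp}, to guarantee the needed irreducibility and non-commutativity.
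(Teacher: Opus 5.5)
Your proposal is correct. The first two steps are the paper's: shadow the homoclinic excursions by periodic orbits (Proposition \ref{propn:anosov-closing}), use ellipticity of $A_p$ to pass to a subsequence with $A_p^{2n}\to\text{Id}$, obtain $\text{Tr}\,\rho^A_p(\gamma)=\text{Tr}\,\rho^B_p(\gamma)$ on $\mathcal{H}_p$, and then apply the character theorem. You depart from the paper in how you secure condition (a) of Lemma \ref{lem:extension}.

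The paper keeps only the trace identity on $\mathcal{H}_p$ and takes the $C_p$ produced by the character theorem. It then combines the equivariance $\rho^A_p(f(\gamma))=A_p\,\rho^A_p(\gamma)\,A_p^{-1}$ (likewise for $B$) with Schur's lemma to get $C_pA_pC_p^{-1}=\lambda B_p$. Determinants give $\lambda^2=1$, and $\text{Tr}(A_p)\neq 0$ gives $\lambda=1$.

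You instead upgrade the dynamical input to $\text{Tr}(\rho^A_p(\gamma)A_p^K)=\text{Tr}(\rho^B_p(\gamma)A_p^K)$ for all $\gamma\in\mathcal{H}_p$ and $K\geq 0$. By the same equivariance, which holds for $A$ and $B$ over the same $f$, every word in $\mathcal{H}_p$ and an extra letter $t$ reduces to this form, with the same $\gamma$ and $K$ for both cocycles. You then apply the character theorem to the two representations of the free product of $\mathcal{H}_p$ with a generator $t\mapsto A_p=B_p$, which gives (a) and (b) at once.

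What each route buys:
\begin{itemize}
\item Your route costs a slightly stronger closing statement: periodic orbits that follow the $m$ excursions and then spend $K$ extra steps at $p$. Equal-length segments, as in Proposition \ref{propn:anosov-closing}, only put the same limit point of $A_p^{2n}$ in every slot. The general shadowing lemma supplies the orbits you need.
\item In return you never use $\text{Tr}(A_p)\neq 0$, and you do not need the real commutant of $\rho^B_p$ to consist of scalars.
\item Your argument therefore also covers a representation that is irreducible over $\mathbb{R}$ but has abelian image, where the commutant is $\cong\mathbb{C}$. The paper's determinant-and-trace step presupposes $\lambda$ is a genuine scalar, which holds under absolute irreducibility.
\item The paper's route is purely algebraic once the weaker trace identity is in hand, and it uses Proposition \ref{propn:anosov-closing} exactly as stated.
\end{itemize}

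Two small corrections.
\begin{itemize}
\item Proposition \ref{propn:irreduciblegp} does not supply non-commuting elements. Your detour through $\text{GL}(2,\mathbb{C})$, absolute irreducibility and the rescaling via $\overline{C_p}=\lambda C_p$ therefore breaks down precisely in the abelian case. It is also unnecessary: \cite[Corollary XVII.3.8]{lang2012algebra} applies directly to semisimple (here irreducible) representations over $\mathbb{R}$ and yields a real intertwiner.
\item In your route, $|\text{Tr}(A_p)|<2$ is used only, together with the implicit assumption $\det A_p=1$, to make $A_p$ elliptic in the shadowing step. Irreducibility is a hypothesis of the statement, so Proposition \ref{propn:irreduciblegp} plays no role.
\end{itemize}
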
 

In light of the previous subsection, we show that Proposition \ref{propn:exact-livsic} implies the following.

\begin{theorem}[Matching trace alternative]\label{thm:alternativetheorem}
    Let $A, B : \mathbb{T}^3 \rightarrow \text{SL}^\pm(2,\mathbb{R})$ be two H\"older continuous fiber bunched maps which have matching periodic trace. If $A_{p} = B_{p}$ and $0 < |\text{Tr}(A_{p})| < 2$, then one of the following must hold:
    \begin{enumerate}[(a)]
        \item $A$ is an almost coboundary, or
        \item $A$ and $B$ are conjugate.
    \end{enumerate}
\end{theorem}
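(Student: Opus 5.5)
The plan is to reduce Theorem \ref{thm:alternativetheorem} to Proposition \ref{propn:exact-livsic}, using Propositions \ref{propn:KK} and \ref{propn:irreduciblegp} to split into cases. First, $0<|\text{Tr}(A_p)|<2$ with $A_p \in \text{SL}^\pm(2,\mathbb{R})$ forces $A_p$ to be elliptic. Indeed, if $\det A_p=-1$ the eigenvalues are real with product $-1$, and their sum could still have modulus less than $2$. So we first note that the condition is intended for $\det A_p = 1$, which is the case in the application: the fixed point has complex unstable eigenvalues. If necessary, we replace $f$ by $f^2$ and $p$ remains fixed. With ellipticity in hand, and since $B_p = A_p$, both $A_p$ and $B_p$ are elliptic. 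So Proposition \ref{propn:irreduciblegp} applies to each cocycle separately. It says that each of the homoclinic holonomy groups $G^A_p$ and $G^B_p$ is either trivial or irreducible.

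We then split into cases.

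\emph{Case 1: $G^A_p$ is trivial.} Proposition \ref{propn:KK} says $A$ is an almost coboundary, which is alternative (a).

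\emph{Case 2: $G^A_p$ and $G^B_p$ are both irreducible.} The Parry representations $\rho^A_p$ and $\rho^B_p$ are then irreducible on the monoid $\mathcal{H}_p$. An invariant subspace for the monoid image is invariant for the group it generates, and conversely, so irreducibility of the group and of the monoid agree. All hypotheses of Proposition \ref{propn:exact-livsic} now hold, so $A$ and $B$ are conjugate, which is alternative (b).

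\emph{Case 3: $G^A_p$ is irreducible and $G^B_p$ is trivial.} This is the main obstacle: we must rule it out, or show it forces (a). By Proposition \ref{propn:KK}, $B$ is an almost coboundary, say $B(x) = C(f(x))\,L\,C(x)^{-1}$ with $L$ elliptic. For a periodic point $x$ of period $k$ this gives $B^k_x = C(x) L^k C(x)^{-1}$, so $\text{Tr}(B^k_x) = \text{Tr}(L^k)$. By matching periodic trace, $A$ then has constant periodic trace data: $\text{Tr}(A^k_x) = \text{Tr}(L^k)$ whenever $f^k(x)=x$. Since $L$ is conjugate to a rotation $R_\vartheta$, this reads $\text{Tr}(A^k_x) = 2\cos(k\vartheta)$.

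The idea is to feed this into the closing lemma, Proposition \ref{propn:anosov-closing}. Take $m$ homoclinic orbits to $p$ and use the closing lemma with long excursions near $p$. This produces periodic points $q_n$ such that $A^{\text{per}}_{q_n}$ is approximately
\[
A_p^{N_m}\,\rho^A_p(\gamma_m)\cdots A_p^{N_1}\,\rho^A_p(\gamma_1)
\]
up to conjugation, with errors that go to zero. The exponents $N_j$ are free, and by choosing them we can make $A_p^{N_j}$ approximate any element of the closure of $\{R_\vartheta^N\}$. For $\vartheta/\pi$ irrational this closure is all of $SO(2)$. If $\vartheta/\pi$ is rational we use the finite cyclic group instead; the case $\text{Tr}(A_p)\neq 0$ excludes order $4$.

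Passing to the limit, the trace of every word in the rotations and the elements of $G^A_p$ equals the trace of the corresponding pure rotation $R_{\vartheta'}$. So the character of the representation of the group generated by $G^A_p$ and the rotations agrees with that of a representation through $SO(2)$ alone. A representation of a group into $\text{SL}(2,\mathbb{R})$ with the same trace function as a reducible (abelian, compact) one is determined by that trace function when it is irreducible. Irreducibility of $G^A_p$ therefore forces the group generated by $G^A_p$ and the rotations to be conjugate into $SO(2)$, that is, compact and abelian. Then every element $g \in G^A_p$ must satisfy $\text{Tr}(g R) = \text{Tr}(R)$ for all $R$ in this compact group, and this forces $g = \text{Id}$. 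So $G^A_p$ is trivial, which contradicts irreducibility.

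The delicate part is Case 3: getting enough rotations from the powers $A_p^N$, and controlling the closing-lemma errors uniformly in the $N_j$. Fiber bunching and the Hölder estimates of Proposition \ref{propn:holonomies} should control these errors. If that fails, I would instead symmetrize the argument so that it produces (a) directly.
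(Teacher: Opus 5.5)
Your Cases 1 and 2 are exactly the paper's argument. If $A$ is not an almost coboundary, Proposition~\ref{propn:KK} makes $G^A_p$ nontrivial, Proposition~\ref{propn:irreduciblegp} makes it irreducible, and Proposition~\ref{propn:exact-livsic} gives (b).

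The paper's one-line proof, however, simply asserts that the Parry representations of \emph{both} $A$ and $B$ are then irreducible. So your Case 3 ($G^A_p$ irreducible, $G^B_p$ trivial) is a case the paper passes over, and you are right that it has to be ruled out.

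You are also right that $0<|\text{Tr}(A_p)|<2$ does not force ellipticity in $\text{SL}^\pm(2,\mathbb{R})$. The theorem should be read with $\det A_p=1$, which Propositions~\ref{propn:KK} and \ref{propn:irreduciblegp} already assume. Passing to $f^2$ does not repair this, though. If $\det A_p=-1$ and $|\text{Tr}(A_p)|<2$, then $A_p^2$ is hyperbolic with trace $\text{Tr}(A_p)^2+2>2$.

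Your treatment of Case 3 is heavier than necessary, and as written it is not yet a proof. It rests on two things you have not established:
\begin{enumerate}
\item a closing lemma with independent excursion lengths $N_j$ and errors controlled uniformly in the $N_j$;
\item a loosely stated character argument.
\end{enumerate}
Neither is needed. The trace lemma proved right after this theorem in the paper gives $\text{Tr}\,\rho^A_p(\gamma)=\text{Tr}\,\rho^B_p(\gamma)$ for all $\gamma\in\mathcal{H}_p$. It closes up $m$ homoclinic excursions of equal length $2n$ and passes to a subsequence with $A_p^{2n}\to\text{Id}$. This is your construction with every rotation set to $\text{Id}$, and consistently $\text{Tr}(L^{2nm})\to 2$ along that subsequence.

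In Case 3 this says $\text{Tr}\,\rho^A_p(\gamma)=2$ for every $\gamma\in\mathcal{H}_p$. Holonomies lie in $\text{SL}(2,\mathbb{R})$, so every element of the monoid $\rho^A_p(\mathcal{H}_p)$ is unipotent. A semigroup of unipotent matrices has a common fixed line (Kolchin). In dimension $2$ you can check this directly: for nonzero rank-one nilpotents $N_1,N_2$, one has $\text{Tr}\bigl((I+N_1)(I+N_2)\bigr)=2+\text{Tr}(N_1N_2)$, and this equals $2$ only if $\ker N_1=\ker N_2$. A common fixed line contradicts irreducibility of $G^A_p$, so Case 3 cannot occur.

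Equivalently, $\rho^A_p$ is irreducible, hence semisimple, and has the same character as the trivial two-dimensional representation. The semisimple version of Lang's Corollary XVII.3.8 then forces $\rho^A_p$ to be trivial. This is the precise content of your sentence about representations being ``determined by the trace function.'' It also removes any need for density of $\{A_p^N\}$ in $\text{SO}(2)$, the rational-angle discussion, and the uniformity issue you flagged as delicate.
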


\begin{proof}
    If $A$ is not an almost coboundary, then combining Proposition \ref{propn:KK} and \ref{propn:irreduciblegp} we must have that the Parry representations of $A$ and $B$ at $p$ are both irreducible. We can then apply Proposition \ref{propn:exact-livsic} to get that (b) holds.
\end{proof}

The goal is to reduce Proposition \ref{propn:exact-livsic} to Lemma \ref{lem:extension}. Thus, we just need to find $C_{p} \in \text{SL}(2,\mathbb{R})$ satisfying (a) and (b) in Lemma \ref{lem:extension}. The first step is to show that the matching periodic trace assumption extends to the Parry representations having the same character. 

\begin{lemma}
     Let $A$ and $B$ be two H\"older continuous $\text{SL}^\pm(2,\mathbb{R})$ fiber bunched maps over $f$. If $A$ and $B$ have matching periodic trace and $A_{p} = B_{p}$ is elliptic,
    then  $\text{Tr}(\rho^A_{p})(\gamma) = \text{Tr}(\rho^B_{p})(\gamma)$ for each $\gamma \in \mathcal{H}_{p}$.
\end{lemma}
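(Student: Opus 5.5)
We will approximate $\rho^A_p(\gamma)$ by products of the cocycle along periodic orbits produced by Proposition \ref{propn:anosov-closing}, and then pass the trace identity to the limit.

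Take $\gamma = \gamma_m * \cdots * \gamma_1 \in \mathcal{H}_p$, where each $\gamma_j$ is a homoclinic $us$-loop through a point $z_j \in W^u(p)\cap W^s(p)$. As in the proof of Proposition \ref{propn:KK}, the limit formula of Proposition \ref{propn:holonomies} expresses the homoclinic holonomy as
\[ \rho^A_p(\gamma_j) = \lim_{n\to\infty} (A^n_p)^{-1} A^{2n}_{f^{-n}(z_j)} (A^n_p)^{-1}, \]
and similarly for $B$. Hence $\rho^A_p(\gamma)$ is the limit of the product $R_n = R^1_n \cdots R^m_n$ of these expressions.

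\textbf{Step 1: reduce to $\text{Tr}(A_p^{-2mn}\cdot\text{product})$.} Up to ordering, the product $A^{2n}_{f^{-n}(z_m)}\cdots A^{2n}_{f^{-n}(z_1)}$ is approximately the cocycle along a periodic orbit. Indeed, the points $x_j = f^{-n}(z_j)$ satisfy $f^{2n}(x_j) \approx p \approx x_j$, with error $O(\alpha^n)$. Proposition \ref{propn:anosov-closing} then gives a periodic point $q_n$ of period $2mn$ whose orbit shadows the concatenated segments exponentially closely, with error $C\alpha^{\min\{k,2n-k\}}$. By the standard Hölder distortion estimate, combined with fiber bunching (Proposition \ref{propn:fiberbunched}, which makes the growth of $\|A^k\|\,\|(A^k)^{-1}\|$ slower than the shadowing contraction), we get
\[ A^{2mn}_{q_n} = \bigl(A^{2n}_{x_m}+o(1)\cdot\|\cdot\|\bigr)\cdots. \]
The cleanest form is obtained by conjugating with $A^n_p$ near each junction at $p$, which yields
\[ A^{2mn}_{q_n} \approx A^n_p\, \tilde R_n\, (A^n_p)^{-1}, \]
where $\tilde R_n = \prod_j A_p^{-n}\, A^{2n}_{x_j}\, A_p^{-n}\cdot A_p^{2n}$. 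In other words, $\tilde R_n$ inserts the factor $A_p^{2n}$ between consecutive loops, and taking traces removes the outer conjugation.

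\textbf{Step 2: use ellipticity to handle the powers $A_p^{2n}$.} Since $A_p = B_p$ is elliptic, it is conjugate to a rotation, so $\{A_p^{n}\}$ is bounded and we may choose a subsequence $n_k$ with $A_p^{2n_k}\to \text{Id}$. Then $A_p^{\pm n}$ stays bounded, the closing errors stay small after multiplication by these bounded matrices, and along $n_k$
\[ \text{Tr}(A^{2mn_k}_{q_{n_k}}) \to \text{Tr}(\rho^A_p(\gamma)). \]
The same closing point $q_{n_k}$, with the same $A_p^{2n_k} = B_p^{2n_k}$, gives $\text{Tr}(B^{2mn_k}_{q_{n_k}}) \to \text{Tr}(\rho^B_p(\gamma))$. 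By the matching periodic trace hypothesis the left-hand sides agree, which gives the claim.

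\textbf{Main obstacle.} The main difficulty is the quantitative shadowing estimate. We must show that replacing $A^{2n}_{x_j}$ by the cocycle along the true periodic orbit costs only $o(1)$ after conjugation by $A_p^{\pm n}$, even though the cocycle norms grow. We plan to bound each one-step difference by $d^\eta \lesssim \alpha^{\eta\min\{k,2n-k\}}$. The accumulated factors $\|A^k\|\,\|(A^{k})^{-1}\|$ are controlled by fiber bunching, which makes their products summable, exactly as in the proof of existence of holonomies. Because $A_p$ is elliptic, the junction terms are uniformly bounded, so this estimate closes.
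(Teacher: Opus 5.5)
Your proposal is correct and follows essentially the paper's route: write each $\rho^A_p(\gamma_j)$ as $\lim_n A_p^{-n}A^{2n}_{f^{-n}(z_j)}A_p^{-n}$, close the $m$ segments into one period-$2mn$ orbit with Proposition~\ref{propn:anosov-closing}, control the block-by-block error with a telescoping H\"older/fiber-bunching estimate, and use ellipticity of $A_p=B_p$ to pass to a subsequence before applying matching traces. Your explicit choice of $n_k$ with $A_p^{2n_k}\to\text{Id}$, together with the identity $A_p^{n}\tilde R_n A_p^{-n}=\prod_j A^{2n}_{x_j}$, makes precise the step the paper compresses into ``along a subsequence''; for a general fiber bunched cocycle (as opposed to the near-conformal one in the application), the telescoping bound is safest if you split each length-$2n$ block at its midpoint and telescope forward on the first half and backward on the second, so that fiber bunching is always paired with the decaying side of the shadowing error.
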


\begin{proof}
    Let $\gamma = \gamma_m * \cdots * \gamma_1 \in \mathcal{H}_{p}$, where each $\gamma_j$ is a homoclinic $us$-loop corresponding to a point $z_j$ homoclinic to $p$. Define
    \[ R_n^j \coloneq (A_{p}^n)^{-1} A_{f^{-n}(z_j)}^{2n}(A_{p}^n)^{-1} \quad \text{and}\quad R_n \coloneq R_n^1 \, \cdots \, R_n^m.\] 
    We have 
    \[ \lim_{n \rightarrow \infty} \|\rho^A_{p}(\gamma) - R_n\| = 0.\]
    We want to approximate this with a periodic orbit. Using the improved Anosov closing lemma for multiple orbits (Proposition \ref{propn:anosov-closing}), we are able to construct a periodic point $p_n$ with period $2nm$ such that there is an $\varepsilon_n$ where for all $0 \leq k \leq 2n-1$ and $1 \leq j \leq m$ we have 
    \[ d(f^{k + 2n(j-1)}(p_n), f^{k-n}(z_j)) \leq C \alpha^{\min\{k, 2n-k\}} \varepsilon_n.\]
    Furthermore, $\varepsilon_n \rightarrow 0$ as $n \rightarrow \infty$. Let
    \[ P^j_n \coloneq (A^n_{p})^{-1} \, A^{2n}_{f^{2n(j-1)}(p_n)} \, (A^n_{p})^{-1} \quad\text{and}\quad P_n \coloneq P_n^1 \, \cdots \, P_n^m.\]
    The claim is that $P_n$ converges to $\rho^A_{p}(\gamma)$.  As a first step, we show that if we set $U_n^j \coloneq (R_n^j)^{-1} \, P_n^j$, then $\|U_n^j - \text{Id} \| \rightarrow 0$ as $n \rightarrow \infty$. Since $A$ is H\"older, the following holds for some $C_2 > 0$ and $\beta \in (0,1)$:
    \[ \|A_{f^{k + 2n(j-1)}(p_n)} - A_{f^{k-n}(z_j)}\| \leq  C_2 \alpha^{\beta \min\{k, 2n-k\}} \varepsilon_n^\beta.\]
    Now we want to approximate $R_n^j$ using this periodic orbit. Notice
    \[ \|R_n^j - P_n^j\| \leq \|R_n^j\| \cdot \|\text{Id} - A^n_{p} (A^{2n}_{f^{-n}(z_j)})^{-1} A^{2n}_{f^{2n(j-1)}(p_n)} (A^n_{p})^{-1}\|.\]
    Using the fact that $R_n^j$ converges in norm as $n$ tends to infinity and $A_{p}$ is elliptic,  there is a constant $C_3 > 0$ such that
    \[ \|R_n^j - P_n^j\| \leq C_3 \| \text{Id} - (A^{2n}_{f^{-n}(z_j)})^{-1} A^{2n}_{f^{2n(j-1)}(p_n)}\|. \]
    It now suffices to show that  $\| \text{Id} - (A^{2n}_{f^{-n}(z_j)})^{-1} A^{2n}_{f^{2nj}(p_n)}\|$ tends to zero. To that end, use a telescoping argument to write
    \[   (A^{2n}_{f^{-n}(z_j)})^{-1} A^{2n}_{f^{2n(j-1)}(p_n)} - \text{Id} = \sum_{k=0}^{2n-1} (A^{k}_{f^{-n}(z_j)})^{-1} \, r_k \, A^{k}_{f^{2n(j-1)}(p_n)},\]
    where
    \[ r_k \coloneq (A_{f^{k-n}(z_j)})^{-1} \, A_{f^{k + 2n(j-1)}(p_n)} - \text{Id}.\]
    We have that there is a constant $C_4 > 0$ such that
    \[ \|r_k\| \leq \|(A_{f^{k-n}(z_j)})^{-1}\| \cdot \|A_{f^{k-n}(z_j)} - A_{f^{k+2n(j-1)}(p_n)}\| \leq C_4 \alpha^{\beta \min \{k, 2n-k\}} \varepsilon_n^\beta.\]
    Thus, we just need to analyze $\|(A^k_{f^{-n}(z_j)})^{-1}\| \cdot \|A^k_{f^{2n(j-1)}(p_n)}\|.$
    Using the triangle inequality, we observe that there are constants $C_5, C_6 > 0$ such that
    \[\begin{split}  \|(A^k_{f^{-n}(z_j)})^{-1}\| \cdot \|A^k_{f^{2n(j-1)}(p_n)}\| &\leq \prod_{q=0}^{k-1} \|A_{f^{q-n}(z_j)}\| \cdot \|(A_{f^{q-n}(z_j)})^{-1}\| \cdot \prod_{q=0}^{k-1} \frac{\|A_{f^{2n(j-1)+q}(p_n)}\|}{\|A_{f^{q-n}(z_j)}\| } \\& \leq C_5 \theta^k \alpha^{-k \beta} \cdot \prod_{q=0}^{k-1} \left(1 + C_6 \alpha^{\min\{q, 2n-q\}\beta}\right).\end{split} \]
    Finally, we claim that the product is uniformly bounded. Indeed, using the fact that $\log(1+t) \leq t$, we have
    \[ \log \left( \prod_{q=0}^{k-1} \left(1 + C_6 \alpha^{\min\{q, 2n-q\}\beta}\right)\right) \leq \sum_{q=0}^\infty C_6 \alpha^{\beta q} < \infty. \]
    Hence, there is a $C_7 > 0$ such that 
    \[ \|(A^k_{f^{-n}(z_j)})^{-1}\| \cdot \|A^k_{f^{2n(j-1)}(p_n)}\| \leq C_7 \theta^k \alpha^{-k \beta},\]
    and consequently there is a constant $C_8 > 0$ such that
    \[ \| \text{Id} - (A^{2n}_{f^{-n}(z_j)})^{-1} A^{2n}_{f^{2n(j-1)}(p_n)}\| \leq C_8 \varepsilon_n. \]
    We can put this all together to get
    \[ \lim_{n \rightarrow \infty} \|\rho_{p}^A(\gamma_j) - P_n^j \| = 0 \quad\text{and}\quad \lim_{n \rightarrow \infty} \|U_n^j - \text{Id}\| = 0.\]

      With these two facts in mind, we can use a telescoping argument to write
    \[ P_n - R_n = \sum_{k=1}^m \left( \prod_{i=1}^{k-1} P_n^i \right) P_n^k(U_n^k - \text{Id}) \left( \prod_{i=k+1}^m  R_n^i\right),  \]
    and applying norms, we have
    \[ \|P_n - R_n\| \leq  \max_{1 \leq i \leq m} (1+\|P_n^i\|)^m  \cdot \max_{1 \leq i \leq m} (1+\|R_n^i\|)^m \cdot \sum_{k=1}^m \|P_n^k\| \cdot \|U_n^k - \text{Id}\|.  \]
    Using the fact that $\|P_n^i\|$ and $\|R_n^i\|$ are bounded in $n$ and $m$ is fixed, we can take the limit as $n$ tends to zero on both sides to deduce that $P_n \rightarrow \rho^A_{p}(\gamma)$ in norm. Since $A_{p} = B_{p}$ are elliptic, we get that along a subsequence
    \[ \text{Tr}(\rho^B_{p}(\gamma)) = \lim_{n \rightarrow \infty} \text{Tr}(B^{2nm}_{p_n}) = \lim_{n \rightarrow \infty} \text{Tr}(A^{2nm}_{p_n}) = \text{Tr}(\rho^A_{p}(\gamma)). \qedhere\]
\end{proof}

We can use \cite[Corollary XVII.3.8]{lang2012algebra} to find a matrix $C$ satisfying condition (b) in Lemma \ref{lem:extension}.

\begin{lemma}
    Let $\rho, \eta : \mathcal{H}_{p} \rightarrow \text{SL}^\pm(2,\mathbb{R})$ be irreducible representations. If ${\text{Tr}(\rho(\gamma)) = \text{Tr}(\eta(\gamma))}$ for all $\gamma \in \mathcal{H}_{p}$, then there exists $C \in \text{SL}^\pm(2,\mathbb{R})$ such that $C \rho(\gamma) C^{-1} = \eta(\gamma)$ for all $\gamma \in \mathcal{H}_{p}$.
\end{lemma}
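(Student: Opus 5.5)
The plan is to pass from the monoid to its real monoid algebra and apply the character-theoretic uniqueness of semisimple modules in characteristic zero, namely \cite[Corollary XVII.3.8]{lang2012algebra}, and then normalize the determinant. Let $R \coloneq \mathbb{R}[\mathcal{H}_{p}]$ be the monoid algebra of $\mathcal{H}_{p}$ over $\mathbb{R}$. Concatenation is associative, and the constant loop at $p$ is a unit in $\mathcal{H}_{p}$, so $R$ is an associative unital $\mathbb{R}$-algebra. The representations $\rho$ and $\eta$ extend $\mathbb{R}$-linearly to algebra homomorphisms $\rho, \eta : R \to M_2(\mathbb{R})$. These make $\mathbb{R}^2$ into two left $R$-modules, which I will call $V_\rho$ and $V_\eta$.

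First, I check that both modules are simple and hence semisimple. An $R$-submodule of $V_\rho$ is a subspace invariant under every $\rho(\gamma)$ with $\gamma \in \mathcal{H}_{p}$, since these span $\rho(R)$. By irreducibility the only such subspaces are $0$ and $\mathbb{R}^2$, so $V_\rho$ is simple, and the same holds for $V_\eta$.

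Second, I compare characters. The character of $V_\rho$ as an $R$-module is the $\mathbb{R}$-linear map $r \mapsto \text{Tr}(\rho(r))$ on $R$. The hypothesis says that $\text{Tr}\,\rho$ and $\text{Tr}\,\eta$ agree on the basis $\mathcal{H}_{p}$ of $R$. By linearity they agree on all of $R$, so $V_\rho$ and $V_\eta$ have the same character.

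Third, I apply the corollary. Since $\mathbb{R}$ has characteristic zero and $V_\rho, V_\eta$ are finite-dimensional semisimple $R$-modules with equal characters, \cite[Corollary XVII.3.8]{lang2012algebra} gives an $R$-module isomorphism $C_0 : V_\rho \to V_\eta$. Concretely, $C_0 \in \text{GL}(2,\mathbb{R})$ satisfies $C_0 \rho(\gamma) C_0^{-1} = \eta(\gamma)$ for all $\gamma$.

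Finally, I normalize the determinant. Set $C \coloneq |\det C_0|^{-1/2} C_0$. Then $\det C = \pm 1$, so $C \in \text{SL}^\pm(2,\mathbb{R})$, and scalars do not affect conjugation, so $C$ still intertwines $\rho$ and $\eta$.

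The step needing the most care is ensuring the cited corollary applies over $\mathbb{R}$ rather than over an algebraically closed field. A real-irreducible representation can become reducible over $\mathbb{C}$, for instance when $\rho(R) \cong \mathbb{C}$ is generated by an irrational rotation. So I cannot simply appeal to absolute irreducibility and Burnside's theorem. Lang's statement, however, is for semisimple modules over an arbitrary algebra over a field of characteristic zero, and it compares characters computed as $\mathbb{R}$-linear traces, which is exactly our setting. As a sanity check in the non-absolutely-irreducible case: if $\rho(R)$ is a field isomorphic to $\mathbb{C}$, both modules are one-dimensional over that field, and matching traces force the two embeddings of $R$ to agree up to complex conjugation. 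Complex conjugation is realized by a reflection, which is indeed a matrix in $\text{SL}^\pm(2,\mathbb{R})$ of determinant $-1$, consistent with the conclusion.
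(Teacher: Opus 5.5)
Your proposal is correct and takes the same approach as the paper, whose proof consists of citing \cite[Corollary XVII.3.8]{lang2012algebra}. You fill in what the paper leaves implicit. You pass to the monoid algebra $\mathbb{R}[\mathcal{H}_p]$, where Lang's trace criterion for semisimple modules over a characteristic-zero field applies without needing absolute irreducibility. You then rescale $C_0$ by $|\det C_0|^{-1/2}$ so that it lands in $\text{SL}^\pm(2,\mathbb{R})$.
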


Thus, it remains to show that (a) in Lemma \ref{lem:extension} holds. To that end, we utilize Schur's lemma, the matching trace assumption, and the fact that the matrices are in $\text{SL}^\pm(2,\mathbb{R})$.

\begin{lemma}
    If $C_{p} \, \rho^A_{p}(\gamma) \, C_{p}^{-1} = \rho^B_{p}(\gamma)$ for all $\gamma \in \mathcal{G}_{p}$, then $C_{p}  \, A_{p} \,   C_{p}^{-1} = B_{p}$.
\end{lemma}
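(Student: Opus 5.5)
We compare $M \coloneq C_p^{-1} B_p C_p$ with $A_p$ and show they coincide. The plan is to prove that $M A_p^{-1}$ commutes with the whole homoclinic holonomy group, then pin it down using Schur's lemma, determinants and traces.

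\textbf{Step 1: $A_p$ normalizes $G_p$.} Since $p$ is fixed, Proposition \ref{propn:pcf}(b) shows that conjugating a homoclinic $us$-loop at $p$ by $f$ gives another such loop. Concretely, $f$ maps a loop through a point $z$ homoclinic to $p$ to the loop through $f(z)$, which is again homoclinic to $p$. Hence
\[ \rho^A_p(f(\gamma)) = A_p^{-1}\,\rho^A_p(\gamma)\,A_p, \qquad \rho^B_p(f(\gamma)) = B_p^{-1}\,\rho^B_p(\gamma)\,B_p .\]
Here we use that $f$ induces a monoid automorphism of $\mathcal{H}_p$ (its inverse is induced by $f^{-1}$), which preserves $\mathcal{G}_p$.

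\textbf{Step 2: transfer through $C_p$.} Apply the hypothesis to $f(\gamma)$ and then to $\gamma$:
\[ C_p A_p^{-1} \rho^A_p(\gamma) A_p C_p^{-1} = \rho^B_p(f(\gamma)) = B_p^{-1} C_p \rho^A_p(\gamma) C_p^{-1} B_p .\]
Setting $M \coloneq C_p^{-1} B_p C_p$, this rearranges to the statement that $M A_p^{-1}$ commutes with $\rho^A_p(\gamma)$ for every $\gamma\in\mathcal{G}_p$.

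\textbf{Step 3: Schur's lemma.} We are in the irreducible setting where the lemma is applied, so $\rho^A_p$ is irreducible. Over $\mathbb{R}$, the commutant of an irreducible representation on $\mathbb{R}^2$ is a division algebra, hence isomorphic to $\mathbb{R}$ or $\mathbb{C}$.
\begin{itemize}
\item If it is $\mathbb{R}$, then $M = \lambda A_p$ with $\lambda \in \mathbb{R}$. Taking determinants in $\text{SL}^\pm(2,\mathbb{R})$ gives $\lambda^2 = \pm 1$, so $\lambda = \pm 1$.
\item If it is $\mathbb{C}$, then $G_p$ preserves a complex structure $J$. A subgroup of $\text{SL}^\pm$ that commutes with $J$ lies in a compact rotation group, after a change of basis. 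In that case $M A_p^{-1} = a\,\text{Id} + bJ$, and the determinant condition forces $a^2+b^2=1$, so $M A_p^{-1}$ is a rotation $R$.
\end{itemize}

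\textbf{Step 4: use the traces.} This is the main obstacle: excluding the sign $\lambda=-1$ and the nontrivial rotations $R$. Conjugation preserves trace, so $\text{Tr}(M) = \text{Tr}(B_p)$. By matching periodic trace at the fixed point $p$, $\text{Tr}(B_p) = \text{Tr}(A_p)$.
\begin{itemize}
\item In the real case, $\text{Tr}(M) = \lambda\,\text{Tr}(A_p)$. Since $\text{Tr}(A_p) \neq 0$, this forces $\lambda = 1$, so $M = A_p$.
\item In the complex case, first note that $A_p$ normalizes $G_p$, and elliptic $A_p$ sits in the same rotation group in the adapted basis. So $A_p$ and $R$ commute and are simultaneously rotations, by angles $\theta$ and $\phi$ say. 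Matching trace gives $\cos(\theta+\phi) = \cos\theta$, and applying the same argument to $A_p^k$ gives $\cos(k\theta+k\phi) = \cos(k\theta)$ for all $k$. For $\phi\neq 0$ this fails for some $k$, unless $\phi = -2\theta$. That leftover case should be excluded using the determinant sign, or by noting that $M$ is conjugate to $A_p$, which yields $\theta+\phi=\pm\theta$.
\end{itemize}
A cleaner route that I would try first is to avoid powers entirely. Since $M$ is conjugate to $A_p$, both are elliptic with the same eigenvalues; a rotation that commutes with $A_p$ and maps it to a conjugate either fixes it or inverts its angle, and the latter is ruled out by a further argument on $\mathcal{G}_p$. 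In every case the conclusion is $C_p A_p C_p^{-1} = B_p$.
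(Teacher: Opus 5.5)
Your Steps 1--3 and the real-commutant branch of Step 4 are the paper's argument. The paper sets $M_p=C_pA_pC_p^{-1}$, observes that $B_p^{-1}M_p$ commutes with $\rho^B_p$, and applies Schur's lemma to get $M_p=\lambda B_p$. It then uses $\det$ and $\text{Tr}(A_p)\neq 0$ to force $\lambda=1$. The paper applies Schur as if the commutant were always scalar. You correctly notice that over $\mathbb{R}$ there is a second case, with $G_p$ inside a conjugate of $\text{SO}(2)$, which Proposition~\ref{propn:irreduciblegp} explicitly allows.

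Your handling of that second case has a genuine gap. With $A_p=B_p=R_\theta$ in the adapted basis, you correctly reduce to $M=R_{\theta+\phi}$ with $\phi\in\{0,-2\theta\}$. Nothing you propose rules out $\phi=-2\theta$, i.e.\ $C_p^{-1}B_pC_p=A_p^{-1}$, equivalently $C_pJC_p^{-1}=-J$:
\begin{itemize}
\item $\det(A_p^{-1})=\det A_p$, so the determinant of $M$ gives nothing.
\item $A_p^{-1}$ \emph{is} conjugate to $A_p$ in $\text{GL}(2,\mathbb{R})$ (by a reflection), so ``$M$ is conjugate to $A_p$'' only reproduces $\theta+\phi=\pm\theta$.
\item Since $A_p=B_p$, $M^k$ is conjugate to $A_p^k$, so traces of powers carry no new information.
\end{itemize}
Positivity of $\det C_p$ would exclude this case, since a $J$-antilinear map has negative determinant. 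However, the intertwiner obtained from equality of characters is only known to lie in $\text{SL}^\pm(2,\mathbb{R})$.

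In fact no ``further argument on $\mathcal{G}_p$'' using only the data in the lemma can succeed. Let $\rho^A_p$ be the Parry representation of an $\text{SO}(2)$-valued cocycle with $A_p=R_\theta$, $0<|2\cos\theta|<2$, whose image contains some $R_h$ with $h\notin\pi\mathbb{Z}$. Put $B_p=R_\theta$, $C_p=\text{diag}(1,-1)$ and $\rho^B_p:=C_p\rho^A_pC_p^{-1}=(\rho^A_p)^{-1}$. Everything commutes with $R_\theta$, so the relations of your Step 1 hold for both representations. Both are irreducible with the same character. Yet $C_pA_pC_p^{-1}=R_{-\theta}\neq B_p$.

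So you must use more of the matching-trace hypothesis. One way is through the closing-lemma argument for a single homoclinic loop, where $\text{Tr}(A^{2n}_{p_n})=\text{Tr}(P_nA_p^{2n})$. Pass to a subsequence with $A_p^{2n}\to A_p^2$ (rather than $\to\text{Id}$). This gives $\text{Tr}(\rho^A_p(\gamma)A_p^2)=\text{Tr}(\rho^B_p(\gamma)B_p^2)$. In the bad case, with $\rho^A_p(\gamma)=R_h$ and $\rho^B_p(\gamma)=R_{-h}$, this says $\cos(2\theta+h)=\cos(2\theta-h)$, i.e.\ $\sin h\,\sin2\theta=0$. Since $0<|\text{Tr}(A_p)|<2$ forces $\sin2\theta\neq0$, every $\rho^A_p(\gamma)$ would be $\pm\text{Id}$, contradicting irreducibility. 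Adding this step closes the complex branch.
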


\begin{proof}
    Using Proposition \ref{propn:holonomies} as well as the fact that $f(p) = p$, observe that 
    \[ \rho^A_{p}(f(\gamma)) = A_p \rho^A_{p}(\gamma)  A_p^{-1}.\]
    Thus, using the assumption, we deduce that for all $\gamma \in \mathcal{G}_{p}$, we have
    \[ C_{p} A_p \rho_{p}^A(\gamma) A_p^{-1} C_{p}^{-1} = B_p \rho^B_{p}(\gamma) B_p^{-1}.\]
    If we let $M_{p} := C_{p} A_p C_{p}^{-1}$, then for all $\gamma \in \mathcal{G}_{p}$ we have
    \[  B_p \rho^B_{p}(\gamma) B_p^{-1} =  C_{p} A_p \rho_{p}^A(\gamma) A_p^{-1} C_{p}^{-1} = M_{p} C_{p} \rho^A_{p}(\gamma)C_{p}^{-1} M_{p}^{-1} = M_{p} \rho^B_{p}(\gamma) M_{p}^{-1}.\]
    In particular, by Schur's lemma and Proposition \ref{propn:irreduciblegp}, we deduce that there is some $\lambda \in \mathbb{C}$ such that  
    \[ M_{p} = C_{p} A_p C_{p}^{-1} = \lambda B_p.\]
    Since $A,B \in \text{SL}^\pm(2,\R)$, we can take the determinant on both sides to deduce that $\lambda^2 = 1$. The matching trace assumption along with the fact that $0 < |\text{Tr}(A_{p})| < 2$ forces $\lambda = 1$, and the desired result follows.
\end{proof}

This finishes the proof of Proposition \ref{propn:exact-livsic}. 

\subsection{Matching theorem}

We now explain how Theorem \ref{thm:alternativetheorem} implies Theorem \ref{thm:matchingtheorem} and why we are able to use Proposition \ref{propn:KK}. Let $p_f, p_g$, and $p_L$ denote the corresponding fixed points. By structural stability, we know that the corresponding conjugacies match these fixed points. Furthermore, since $f$ and $g$ are $C^1$-close to $L$, we have that both $D^u_{p_f}f$ and $D^u_{p_g}g$ are $C^0$ close to $D^uL_{p_L}$, which is elliptic after normalizing (see Lemma \ref{lem:normalization}). 

Note that if the SRB measure of $f$ coincides with the measure of maximal entropy, then the corresponding $\text{GL}(2,\mathbb{R})$-cocycle to $f$ has cohomologically trivial determinant \cite[Proposition 4.5]{bowen1975equilibrium}. Since the periodic data of $f$ and $g$ match, the same can be said for $g$. Thus, without loss of generality, we are able to normalize these cocycles and work with $\text{SL}^\pm(2,\mathbb{R})$-cocycles. A perturbative argument shows that the normalizations of $D^u_{p_f}f$ and $D^u_{p_g}g$ are both elliptic if $D^u L_{p_L}$ is elliptic. Finally, we note that $0 < |\text{Tr}(D^uL_{p_L})| < 2$, since if $\text{Tr}(D^uL_{p_L}) = 0$, we would have that the stable eigenvalue would be an integer, and therefore zero. Again, assuming $f$ and $g$ are sufficiently close to $L$, this forces $0 < |\text{Tr}(D^u_{p_f}f)|, |\text{Tr}(D^u_{p_g}g)| < 2$, and consequently Theorem \ref{thm:matchingtheorem} holds by Theorem \ref{thm:alternativetheorem}. 

Since $D^u_{p_f}f$ and $D^u_{p_g}g$ are elliptic, we also note that we are able to use Proposition \ref{propn:KK} for such $f$ and $g$. Observe that every $us$-simple loop is also a $us$-quadrilateral by taking the ordered sequence $(a,b, a,b)$. Consequently, if for every $a,b \in \mathbb{T}^3$ in the same stable leaf we have that $\Phi_{a,b}^f$ is the identity, then by Proposition \ref{propn:KK} we have that $D^uf$ has constant periodic data, contradicting our assumption.

\bibliographystyle{alpha}
\bibliography{bibliography.bib}

\end{document}